\documentclass[a4paper,12pt]{amsart}

\usepackage{amssymb}
\usepackage[utf8]{inputenc}
\usepackage[T1]{fontenc}
\usepackage{textcomp}
\usepackage{tikz}
\usepackage{enumitem}

\usepackage{hyperref}
\usepackage{xcolor}
\hypersetup{%
    colorlinks=false,
    pdfborderstyle={/S/U/W 1}
}

\setenumerate[1]{label=(\roman{*}), ref=(\roman{*})}
\setenumerate[2]{label=(\alph{*}), ref=(\alph{*})}

\DeclareMathOperator\supp{supp}
\DeclareMathOperator\sgn{sgn}
\DeclareMathOperator\ext{ext}

\newcommand\N{\mathbb N}

\newcommand{\eps}{\varepsilon}

\newcommand{\ha}{h_{\mathcal{A},1}}
\newcommand{\hap}{h_{\mathcal{A},p}}
\newcommand{\A}{\mathcal{A}}
\newcommand{\mx}{M(x)}
\newcommand{\minfx}{M^\infty(x)}

\newtheorem{thm}{Theorem}[section]
\newtheorem{prop}[thm]{Proposition}
\newtheorem{lem}[thm]{Lemma}
\newtheorem{cor}[thm]{Corollary}

\theoremstyle{definition}
\newtheorem{defn}[thm]{Definition}

\newtheorem{example}[thm]{Example}

\newtheorem{quest}[thm]{Question}

\theoremstyle{remark}

\newtheorem{rem}[thm]{Remark}

\title[Delta-points in $\hap$ spaces]{Delta-points in
  Banach spaces generated by adequate families}

\author[T.~A.~Abrahamsen]{Trond A.~Abrahamsen}
\address[T.~A.~Abrahamsen]{Department of Mathematics, University of
    Agder, Postboks 422, 4604 Kristiansand, Norway.}
\email{trond.a.abrahamsen@uia.no}
\urladdr{http://home.uia.no/trondaa/index.php3}

\author[V.~Lima]{Vegard Lima}
\address[V.~Lima]{Department of Engineering Sciences, University of Agder,
    Postboks 509, 4898 Grimstad, Norway.}
\email{Vegard.Lima@uia.no}
\thanks{V.~Lima acknowledges the support by the Estonian Research Council grant PRG877}
\author[A.~Martiny]{Andr\'e Martiny}
\address[A.~Martiny]{Department of Mathematics, University of
    Agder, Postboks 422, 4604 Kristiansand, Norway.}
\email{andre.martiny@uia.no}

\subjclass[2010]{Primary 46B20, 46B22, 46B04}

\keywords{delta-point, Daugavet-point, 1-unconditional basis, adequate families}

\begin{document}
    
\begin{abstract}
  We study delta-points in Banach spaces $\hap$ generated by adequate
  families $\mathcal A$ where $1 \le p < \infty$. When the
  familiy $\mathcal A$ is regular and $p=1,$ these spaces are known as
  combinatorial Banach spaces. When $p > 1$ we prove that neither
  $\hap$ nor its dual contain delta-points. Under the extra assumption that
  $\mathcal A$ is regular, we prove that the same is true when
  $p=1.$ In particular the Schreier spaces and their duals fail to have
  delta-points. If $\mathcal A$ consists of finite sets only we are able to rule out
  the existence of delta-points in $\ha$ and Daugavet-points in its dual.

  We also show that if $\ha$ is polyhedral,
  then it is either (I)-polyhedral or (V)-polyhedral
  (in the sense of Fonf and Vesel\'y).
\end{abstract}

\maketitle

\section{Introduction}
According  to Talagrand \cite{MR554378,MR736065},
a family $\mathcal{A}$ of subsets  of $\N$
is \emph{adequate} if $\mathcal{A}$ contains the singletons,
is  hereditary and is  compact with respect to the topology
of pointwise convergence.
We study the spaces $\hap$, $1\leq p < \infty$,
generated by an adequate family $\mathcal{A}$ by completing
the space $c_{00}$ of all finitely supported sequences with the norm
\begin{equation*}
  \|\sum_{i=1}^n a_i e_i\|
  = \sup_{A \in \mathcal{A}} (\sum_{i\in A} |a_i|^p)^{1/p}.
\end{equation*}
Examples of $\hap$ spaces are
$c_0$, $\ell_p$, $\ell_1(c_0)$ and the (higher order) Schreier space(s).
We study the presence or absence
of \emph{Daugavet}- and \emph{delta}-points
in $\hap$ spaces and their duals.

For a Banach space $X$ we denote by $B_X$, $S_X$ and $X^*$
the closed unit ball, the unit sphere and the topological dual of $X$
respectively.
Recall that a \emph{slice} of $B_X$ is a set
\begin{equation*}
  S(x^*, \delta) = \left\{x\in B_X \ : \ x^*(x) > 1- \delta \right\},
\end{equation*}
where $x^*\in S_{X^*}$ and $\delta > 0$.
Following \cite{AHLP} we say that $x \in S_X$ is a Daugavet-point
(resp.\ delta-point) if every element in the unit ball
(resp.\ $x$ itself) is in the closed convex hull of unit ball
elements that are almost at distance $2$ from $x$.
In \cite[Lemmas~2.2 and 2.3]{AHLP}, we find the following
characterization of Daugavet- and delta-points which
will serve as our definitions.
\begin{lem}\label{lem:Delta_point_crit}
  Let $X$ be a Banach space.
  Then $x \in S_X$ is a
  \begin{enumerate}
  \item \emph{delta-point} if and only if
    for every slice $S(x^*,\delta)$ of $B_X$ with $x \in S(x^*,\delta)$
    and for every $\varepsilon > 0$ there exists $y \in S(x^*,\delta)$
    such that $\|x - y\| \geq 2 - \varepsilon$.
  \item \emph{Daugavet-point} if and only if
    for every slice $S(x^*,\delta)$ of $B_X$
    and for every $\varepsilon > 0$ there exists $y \in S(x^*,\delta)$
    such that $\|x - y\| \geq 2 - \varepsilon$.
  \end{enumerate}
\end{lem}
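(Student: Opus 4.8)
The plan is to derive both characterizations from a single separation principle: for a set $C \subseteq B_X$ and a point $z \in B_X$, one has $z \in \clonv(C)$ if and only if every slice of $B_X$ that contains $z$ meets $C$. Granting this, set $\Delta_\eps = \{y \in B_X : \norm{x-y} \geq 2-\eps\}$. By the definitions quoted from \cite{AHLP}, $x$ is a delta-point exactly when $x \in \clonv(\Delta_\eps)$ for every $\eps>0$, and a Daugavet-point exactly when $B_X = \clonv(\Delta_\eps)$ for every $\eps>0$. Applying the principle with $C=\Delta_\eps$ translates membership of $x$ into ``$\Delta_\eps$ meets every slice through $x$'', which is statement (i); and since $\norm{x^*}=1$ forces every slice $S(x^*,\delta)$ to be nonempty, the identity $B_X=\clonv(\Delta_\eps)$ becomes ``$\Delta_\eps$ meets every slice of $B_X$'', which is statement (ii). Thus both parts reduce to the principle.

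So the core is to prove the separation principle. For the forward implication, suppose $z \in \clonv(C)$ and let $S(x^*,\delta)$ be a slice with $z \in S(x^*,\delta)$, so $x^*(z) > 1-\delta$. Choose a convex combination $\sum_i \lambda_i c_i$ of elements $c_i \in C$ so close to $z$ that $x^*\bigl(\sum_i \lambda_i c_i\bigr) > 1-\delta$ still holds; then $\max_i x^*(c_i) \geq \sum_i \lambda_i x^*(c_i) > 1-\delta$, so some $c_i$ lies in $C \cap S(x^*,\delta)$.

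For the converse I would argue contrapositively. If $z \notin \clonv(C)$, then since $\clonv(C)$ is closed and convex, the Hahn--Banach separation theorem supplies $x^* \in X^*$ and a scalar $\alpha$ with $\sup_{c \in C} x^*(c) \leq \alpha < x^*(z)$. After dividing by $\norm{x^*}$ we may assume $\norm{x^*}=1$, whence $x^*(z) \leq 1$ and in particular $\alpha < 1$. Picking any $\delta$ with $1-x^*(z) < \delta \leq 1-\alpha$ (possible because $x^*(z) > \alpha$ gives $1-x^*(z) < 1-\alpha$ and $1-\alpha > 0$) produces a slice $S(x^*,\delta)$ that contains $z$ yet is disjoint from $C$, as desired.

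The only delicate point is the bookkeeping in the converse: the functional delivered by Hahn--Banach must be rescaled to the unit sphere and the threshold $1-\delta$ must be placed strictly between $\sup_C x^*$ and $x^*(z)$, so that the normalization $\norm{x^*}=1$ built into the definition of a slice is honoured while $z$ stays inside the slice and $C$ stays out. Once this is checked, everything else is a direct translation between the closed-convex-hull formulation of delta- and Daugavet-points and the slice formulation of the lemma.
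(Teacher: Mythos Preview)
Your argument is correct. The separation principle you state---$z \in \clonv(C)$ if and only if every slice of $B_X$ through $z$ meets $C$---is exactly the right tool, and your proof of both directions via Hahn--Banach is sound; the normalization and threshold bookkeeping in the converse are handled properly.

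Note, however, that the paper does not actually prove this lemma: it quotes the statement from \cite[Lemmas~2.2 and~2.3]{AHLP} and then adopts the slice conditions as working definitions. So there is no in-paper proof to compare against. Your self-contained Hahn--Banach argument is essentially the standard one and is what one finds in the cited reference.
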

A Banach space has the Daugavet property if (and only if)
every $x \in S_X$ is a Daugavet-point.
For more on Daugavet- and delta-points see
\cite{AHLP, abrahamsen2020daugavet, haller2020daugavet,
  jung2020daugavet, zoca2020lorthogonality}.

It is known that if a Banach space $X$ has the Daugavet property,
then $X$ does not have an unconditional basis
\cite[Corollary~2.3]{Kadets}.
In \cite[Theorem~4.7]{abrahamsen2020daugavet} it was shown
that there exists an $\ha$ space
(with 1-unconditional basis) with ``lots'' of Daugavet-points,
in the sense that its unit ball is the weak closure of its
Daugavet-points.
After some preliminaries in Section~\ref{sec:preliminaries},
we show in Section~\ref{sec:case-1-}
that if $1 < p < \infty$ then
neither $\hap$ nor $\hap^*$ have delta-points.

The case $p = 1$ studied in Section~\ref{sec:case-p-er-lik-1}
is not so clear cut.
As mentioned above there exist $\ha$ spaces with delta-points, but
if $\mathcal{A}$ consists of all subsets of $\N$, then
$\ha = \ell_1$ and by \cite[Theorem~2.17]{abrahamsen2020daugavet}
$\ell_1$ does not have delta-points.
We show that if the adequate family $\mathcal{A}$ contains
an infinite set then $\ha^*$ always has delta-points.
Our main focus in Section~\ref{sec:case-p-er-lik-1}
is on adequate families of finite sets.
If $\mathcal{A}$ is an adequate family of finite subsets
it follows from the results of \cite{abrahamsen2020daugavet}
that $\ha$ does not have delta-points.
If $\mathcal{A}$ is in addition spreading (see Section~\ref{sec:preliminaries})
then the $\ha$ spaces we get are the combinatorial Banach spaces
studied by Antunes, Beanland and Chu in \cite{le2019geometry}.
Using results from \cite{le2019geometry}
we show that if $\ha$ is a combinatorial Banach space,
then also $\ha^*$ fail to have delta-points.

In Section~\ref{sec:case-p-er-lik-1} we also show that
all extreme points of $\ha^*$ are $w^*$-exposed for all
families $\mathcal{A}$.
If $\mathcal{A}$ consists of finite sets only,
then the extreme points in $\ha^*$
are in fact $w^*$-strongly exposed.
We use this observation to show that such $\ha^*$
spaces fail to have Daugavet-points.

Finally, in Section~\ref{sec:polyhedrality}
we study polyhedrality in $\ha$ spaces.
A Banach space is \emph{polyhedral} if the unit ball of
every finite-dimensional subspace of $X$ is a polytope
\cite{KleePolyhedralSections}.
In \cite{le2019geometry} it was shown that combinatorial
Banach spaces are all (V)-polyhedral
(in the sense of Fonf and Veselý \cite{MR2057283}).
In fact, their proofs also work without the spreading property
and show that all $\ha$ spaces where the adequate family consists of finite
subsets of $\N$ are (V)-polyhedral.
We observe that $\ha$ is polyhedral if and only if $\ha$
is (V)-polyhedral if and only if $\mathcal{A}$ is an
adequate family of finite sets.
Furthermore, we show that $\ha$ is (I)-polyhedral
if and only if $\ha$ is (IV)-polyhedral if and only if
$\left\{A\in \mathcal{A} : i\in A \right\}$ is
finite for all $i\in \N$.

We conclude the paper with some questions arising from
the present work.

\section{Preliminaries}
\label{sec:preliminaries}
Recall that a Schauder basis $(e_i)_{i\in \N}$ of a Banach space $X$
is a \emph{1-unconditional basis} if for all $N \in \N$
and all scalars $a_1,\ldots,a_N$, $b_1,\ldots,b_N$ such that
$|a_i| \le |b_i|$ for $i=1,\ldots,N$ then the following
inequality holds:
\begin{equation*}
  \left\|\sum_{i=1}^N a_i e_i \right\|
  \le
  \left\|\sum_{i=1}^N b_i e_i \right\|.
\end{equation*}
A basis $(e_i)_{i \in \N}$ is \emph{normalized} if $\|e_i\| = 1$
for all $i$ and it is \emph{shrinking} if the biorthogonal
functionals $(e_i^*)_{i \in \N}$ is a basis for $X^*$.
For $x \in X$ the \emph{support} of $x$ is defined
by $\supp(x) = \{ i \in \N : e_i^*(x) \neq 0 \}$.

If $(e_i)_{i \in \N}$ is 1-unconditional, then by the classic
result of James it is shrinking
if and only if $X$ does not contain an isomorphic copy of $\ell_1$.
If $(e_i)_{i \in \N}$ is a 1-unconditional basis
then for any $A \subset \N$ the projection $P_A$ defined by
\begin{equation*}
  P_A(\sum_{i \in \N} x_i e_i) = \sum_{i \in A} x_i e_i
\end{equation*}
satisfies $\|P_A\| \le 1$.

As mentioned earlier we will study the existence of delta- and Daugavet-points in
sequence spaces with 1-unconditional bases
generated by adequate families of subsets of $\N$.

\begin{defn}\label{defn:adequatefamilies}
  A family $\mathcal{A}$ of subsets of $\N$ is \emph{adequate} if
  \begin{enumerate}
  \item
    $\mathcal{A}$ contains the empty set and the singletons:
    $\{i\} \in \mathcal{A}$ for all $i \in \N$.
  \item
    $\mathcal{A}$ is hereditary:
    If $A \in \mathcal{A}$ and
    $B \subseteq A$, then
    $B \in \mathcal{A}$.
  \item
    $\mathcal{A}$ is compact with respect to the topology
    of pointwise convergence:
    Given $A \subset \N$,
    if every finite subset of $A$ is in $\mathcal{A}$,
    then $A \in \mathcal{A}$.
  \end{enumerate}
\end{defn}
We denote by $\mathcal{A}^{\text{MAX}}$ the maximal elements of
$\mathcal{A}$, that is $A \in \mathcal{A}^{\text{MAX}}$
if $A \in \mathcal{A}$ and $B \in \mathcal{A}$ with
$A \subseteq B$ implies that $A = B$.

Let $c_{00}$ be the vector space of all finitely supported
sequences with standard basis $(e_i)_{i \in \N}$.
If $\mathcal{A}$ is adequate and $1 \le p < \infty$,
then $\hap$ is the completion of $c_{00}$ with respect to the norm
\begin{equation*}
  \left\|\sum_{i=1}^n a_i e_i\right\|
  = \sup_{A \in \mathcal{A}} \biggl(\sum_{i\in A} |a_i|^p\biggr)^{1/p}.
\end{equation*}
It is clear that $(e_i)_{i \in \N}$ is a normalized
1-unconditional basis for $\hap$.

If $\mathcal{A}$ is an adequate family of finite sets of $\N$
which is \emph{spreading}, that is, if $\{ k_1,\ldots,k_n \}\in \mathcal{A}$
and $k_i \le l_i$, then $\{l_1, \ldots, l_n \} \in \mathcal{A}$,
then $\mathcal{A}$ is often called a \emph{regular} family
of subsets of $\N$ and the space $\ha$ is called a combinatorial
Banach space and $\hap$ is its $p$-convexification
(see e.g. \cite{le2019geometry}).

From \cite{abrahamsen2020daugavet} we need the notion of
\emph{minimal norming subsets} for vectors in a Banach space
with 1-unconditional basis (see also the notion of
\emph{1-sets} in \cite{BeanlandExtremeCombinatorial}).

\begin{defn}
  \label{defn:cx}
  For any Banach space $X$ with 1-unconditional basis
  $(e_i)_{i \in \N}$ and for $x \in X$, define
  \[
    \mx := \left\{
      A\subseteq \N :
      \left\|P_Ax \right\| = \left\| x \right\|,
      \left\|P_Ax-x_ie_i\right\| < \left\| x\right\|,
      \ \mbox{for all}\ i\in A
    \right\},
  \]
  and
  \[
    \minfx := \left\{A\in \mx : |A|=\infty \right\}.
  \]
\end{defn}

For any $B = (b_i)_{i \in \N} \subseteq \N$,
assume that the elements are ordered, that is,
$b_i < b_{i+1}$ for all $i$.
For $n \in \N$ we define $B(n) := (b_i)_{i=1}^n$.
Let $X$ be a Banach space with a 1-unconditional basis
$(e_i)_{i \in \N}$, if $n \in \N$, then by 
\cite[Lemma~2.8]{abrahamsen2020daugavet}
the set $\bigcup_{D\in \minfx} \left\{D(n) \right\}$
is finite whenever $x \in S_X$. This means that there
exists $s = s(n) \in \N$ and sets $D_1,\ldots,D_s$ in $\minfx$
such that
\[
  \bigcup_{D\in \minfx} \left\{D(n) \right\}
  = \left\{D_1(n), D_2(n), \ldots, D_{s}(n) \right\}.
\]
Our next goal is to use \cite[Lemma~2.14]{abrahamsen2020daugavet} to 
show that if the number of elements $s$
in this set does not grow too fast as $n$ increases,
then $x$ is not a delta-point.

\begin{thm}
  \label{thm:c-infty-finite-no-delta-point}
  Let $X$ be a Banach space with 1-unconditional basis.
  If for $x\in S_X$ there exists $n\in \N$ such
  that for $s = |\bigcup_{D\in \minfx} \left\{D(n) \right\}|$
  \begin{equation*}
    \left\|P_E x \right\| > 1-\frac{1}{2s}
    \quad \mbox{for all} \quad
    E \in \bigcup_{D\in \minfx} \left\{D(n) \right\},
  \end{equation*}
  then $x$ is not a delta-point.

  In particular, if $|\minfx|<\infty$,
  then $x$ is not a delta-point.
\end{thm}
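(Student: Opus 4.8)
The plan is to use the delta-point characterization from Lemma~\ref{lem:Delta_point_crit}(i): to show $x$ is not a delta-point, I would exhibit a single slice $S(x^*,\delta)$ containing $x$ and an $\eps > 0$ such that every $y \in S(x^*,\delta)$ satisfies $\|x - y\| < 2 - \eps$. The hypothesis gives me control over the finitely many ``initial segment'' sets $E \in \bigcup_{D \in \minfx}\{D(n)\}$, and the key mechanism should be that the mass of $x$ on these finite sets is nearly full, so any $y$ that correlates well with $x$ cannot simultaneously differ from $x$ by almost $2$ on all of them.

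The technical engine is \cite[Lemma~2.14]{abrahamsen2020daugavet}, whose precise statement I do not have but whose role in the excerpt is clearly to convert information about the sets in $\minfx$ into the failure of the delta-point property. So the main structural step is to identify the right functional $x^*$ and $\delta$. Writing $E_1, \ldots, E_s$ for the distinct sets $D_1(n), \ldots, D_s(n)$, I expect the natural choice is to build $x^*$ from a norming functional for $x$ supported on (or respecting) one of these finite initial segments, or more robustly a convex-combination/averaging functional that sees all of them; the constant $1/(2s)$ in the hypothesis strongly suggests an averaging argument over the $s$ sets, where the factor $s$ compensates for spreading the ``defect'' $1 - \|P_E x\|$ across all initial segments. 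First I would set up such a functional so that $x \in S(x^*,\delta)$ is automatic from $x^* (x)$ being close to $1$.

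Next I would estimate $\|x - y\|$ from above for an arbitrary $y \in S(x^*,\delta)$. The heuristic is that $y \in S(x^*,\delta)$ forces $y$ to have substantial mass agreeing in sign and location with $x$ on the finite sets $E_i$, where $\|P_{E_i} x\|$ is close to $1$ by hypothesis. Since these $E_i$ are the only ``candidate'' norming locations coming from the infinite minimal norming sets, having near-full norm of $x$ concentrated there means $y$ cannot be nearly antipodal to $x$: on the region where $x$ is large, $y$ must be aligned with $x$, which caps $\|x-y\|$ strictly below $2$. The 1-unconditionality of the basis and the projection bound $\|P_A\| \le 1$ are what let me split $x$ and $y$ into the relevant finite part and a remainder and control each piece. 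The main obstacle I anticipate is making the threshold bookkeeping exact: ensuring the chosen $\delta$ and the resulting $\eps$ are genuinely positive and that the averaging over the $s$ sets interacts correctly with the $1 - 1/(2s)$ bound, which is precisely where \cite[Lemma~2.14]{abrahamsen2020daugavet} must be invoked rather than reproven.

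Finally, the ``in particular'' clause follows immediately: if $|\minfx| < \infty$, then $\minfx = \{D_1, \ldots, D_m\}$ is finite, so for each fixed $n$ the set $\bigcup_{D \in \minfx}\{D(n)\}$ has at most $m$ elements, bounding $s$ uniformly. Because each $D_j$ lies in $\mx$, we have $\|P_{D_j} x\| = \|x\| = 1$, and since $\|P_{D_j(n)} x\| \to \|P_{D_j} x\| = 1$ as $n \to \infty$ (the initial segments exhaust $D_j$ and the basis is $1$-unconditional), I can choose $n$ large enough that $\|P_E x\| > 1 - 1/(2s)$ holds simultaneously for every one of the finitely many initial segments $E$. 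The main hypothesis then applies and yields that $x$ is not a delta-point.
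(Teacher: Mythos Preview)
Your proposal is correct and follows essentially the same route as the paper: form $x^* = \frac{1}{s}\sum_{k=1}^s x_k^*$ where each $x_k^* \in S_{X^*}$ norms $P_{D_k(n)}x$ and is supported on $D_k(n)$, so that $x \in S(x^*,\tfrac{1}{2s})$; then any $y$ in this slice satisfies $x_k^*(y) > \tfrac{1}{2}$ for every $k$, which forces some coordinate $i \in D_k(n)$ with $y_i > \tfrac{1}{2}x_i$ (after the harmless reduction to $x_i \ge 0$), and this is precisely the input to \cite[Lemma~2.14]{abrahamsen2020daugavet} with $\eta = \tfrac{1}{2}$. Your treatment of the ``in particular'' clause is also correct and in fact more explicit than the paper's.
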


\begin{proof}
  Let $X$ have $1$-unconditional basis $(e_i)_{i \in \N}$ and let $x
  \in S_X$. Since changing signs of the coordinates of vectors is an
  isometry on spaces with 1-unconditional basis
  and delta-points are preserved by isometries
  we may assume that $x = (x_i)_{i \in \N} \in S_X$ with $x_i \ge 0$ for all $i \in \N$.
  By assumption there exists
  $n \in \N$, $s \in \N$ and
  $D_1,\ldots,D_s \in \minfx$ such that
  $\|P_{D_k(n)} x\| > 1 - \frac{1}{2s}$
  for all $k = 1, \ldots, s$.

  For each $k \leq s$ let $x_k^*\in S_{X^*}$
  be such that $x_k^*(P_{D_k(n)}x) =  \left\|P_{D_k(n)}x\right\|$
  and $x_k^*(e_j)=0$ for all $j \notin D_k(n)$.

  Let $x^* = \frac{1}{s}\sum_{k=1}^s {x_k^*}$.
  Then $x^*(x) > 1 - \frac{1}{2s}$, so $x\in S(x^*, \frac{1}{2s})$.
  If $y \in S(x^*, \frac{1}{2s})$ it follows that
  $x_k^*(y) > 1 - \frac{1}{2}$ for all $k = 1, \ldots, s$.
  Fix $k$. If $y_i \le x_i/2$ for all $i \in D_k(n)$,
  then $x_k^*(y) \le \frac{1}{2}x_k^*(x) \le \frac{1}{2}$.
  Thus for each $1 \leq k \leq s$, there exists $i \in D_k(n)$
  such that $y_i > \frac{1}{2}x_i$
  and therefore we can apply
  \cite[Lemma~2.14]{abrahamsen2020daugavet}
  with
  $S(x^*, \frac{1}{2s})$, $n$ and $\eta = \frac{1}{2}$
  and conclude that $x$ is not a delta-point.
\end{proof}

\begin{example}
 Recall that a norm in a Banach lattice $X$ is called \emph{strictly
  monotone} if  $\|x + y\| > \|x\|$ whenever $x,y \in X$ with $x,y\ge
0$ and $y \not= 0.$ From Theorem
\ref{thm:c-infty-finite-no-delta-point} we immediately get that a 
Banach space $X$ with a $1$-unconditional basis and such a
norm has no delta-points since in this case $M(x) = \{\supp(x)\}$
for any $x \in S_X.$
\end{example}

\begin{example}
  The space $X = \ell_1 \oplus_\infty \ell_1$ has a $1$-unconditional
  basis. Moreover, it is easily seen that $|M^\infty(x)| \le 2$ for any $x
  \in S_X,$ so $X$ has no delta-points.
\end{example}

\section{The case $1 < p < \infty$}
\label{sec:case-1-}

In this section we prove the following theorem.

\begin{thm}\label{p_ge_1-ingendelta}
  Let $\mathcal{A}$ be an adequate family of subsets of $\N$
  and let $1 < p < \infty$. Then
  \begin{enumerate}
  \item\label{item:p-1-hap}
    $\hap$ does not have delta-points.
  \item\label{item:p-1-hap-dual}
    $\hap^*$ does not have delta-points.
  \end{enumerate}
\end{thm}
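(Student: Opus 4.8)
The plan is to prove both parts by producing, for each norm-one vector, a single \emph{good} slice: a slice $S(x^*,\delta)$ containing the vector for which some $\eps>0$ keeps all $\norm{\cdot}$-distances inside it away from $2$. By Lemma~\ref{lem:Delta_point_crit} this is exactly the negation of being a delta-point. For part~\ref{item:p-1-hap} fix $x\in S_{\hap}$; exactly as in the proof of Theorem~\ref{thm:c-infty-finite-no-delta-point} I may assume $x=(x_i)_{i}\ge 0$, and throughout I write $q=p/(p-1)$. The guiding principle is that for $p>1$ a thin slice forces genuine $\ell_p$-rigidity on $y$: not only must $y$ point roughly like $x$, but the sizes of the admissible masses $\norm{P_Ay}_{\ell_p}$ are pinned down through the sharpness of Hölder's inequality with exponents $(p,q)$, $q<\infty$. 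This is precisely the ingredient absent at $p=1$ (where $q=\infty$), which is why $\ha$ may have delta-points while $\hap$ does not.

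The functional is an $\ell_p$-duality map averaged over near-norming admissible sets. Fix a small $\rho>0$; since $A\cap F\in\mathcal A$ for finite $F$, one has $\norm{x}=\sup_{A\in\mathcal A}\norm{P_Ax}_{\ell_p}$ with the supremum approached along finite sets, so the family $\mathcal A_\rho:=\{A\in\mathcal A:\norm{P_Ax}_{\ell_p}>1-\rho\}$ of near-norming sets is nonempty; let $E=\bigcup\mathcal A_\rho$. For a single $A$, the map $x^*_A:=\norm{P_Ax}_{\ell_p}^{-(p-1)}\sum_{i\in A}x_i^{p-1}e_i^*$ has $\norm{x^*_A}_{\hap^*}=1$ and $x^*_A(x)>1-\rho$ by Hölder on $A$; but one $A$ cannot suffice, because a near-norming $B$ almost disjoint from $A$---exactly what occurs in $\ell_p(c_0)$---lets a $y$ that is near-antipodal to $x$ on $B$ remain in $S(x^*_A,\rho)$. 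I would instead take $x^*$ to be a barycenter $\int x^*_A\,d\mu(A)$ against a probability measure $\mu$ with full support on $\mathcal A_\rho$. Then $\norm{x^*}_{\hap^*}\le1$ and $x^*(x)>1-\rho$, so $x\in S(x^*,\rho)$ after normalizing, and crucially $x^*$ carries positive weight on \emph{every} coordinate of $E$, of the order $x_i^{p-1}\mu(\{A\in\mathcal A_\rho:i\in A\})$. When $E$ has infinite $\ell_p$-mass I would first replace $\mathcal A_\rho$ by a finite subfamily capturing all but an $\ell_p$-small part of the relevant coordinates, absorbing the remainder into $\eps$.

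With this $x^*$, let $y\in S(x^*,\delta)$ with $\delta\le\rho$. Since $x^*$ pairs with $x$ through the exponents $(p,q)$ and $q<\infty$, the inequality $x^*(y)>1-\delta$ forces, by sharpness of Hölder, that $y_i$ is $\ell_p$-close to $x_i$ across $E$ and that no admissible mass $\norm{P_Ay}_{\ell_p}$ exceeds $\norm{P_Ax}_{\ell_p}$ by more than a quantity tending to $0$ with $\delta$; in particular $y$ cannot be near-antipodal to $x$ anywhere on $E$. Hence any $B\in\mathcal A$ with $\norm{P_B(x-y)}_{\ell_p}$ near $2$ must concentrate its mass off $E$, where each $x_i$ lies strictly below the maximum it could attain inside a norming set; there $|x_i-y_i|$ cannot reach $2x_i$, and summing the resulting coordinatewise deficits gives $\norm{P_B(x-y)}_{\ell_p}\le(2^p-\kappa)^{1/p}$ for a fixed $\kappa=\kappa(x)>0$. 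Taking the supremum over $B$ yields $\norm{x-y}\le 2-\eps$ for all $y$ in the slice, which proves~\ref{item:p-1-hap}. I expect the main difficulty to be precisely this quantitative step: showing that for a \emph{fixed} $x$ the deficits aggregate to a strictly positive $\kappa$ even when the sub-maximal coordinates of $x$ tend to the maximum, and carrying out the covering of $E$ by near-norming sets---together with the truncation---for an arbitrary adequate family, with no block decomposition to rely on.

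For part~\ref{item:p-1-hap-dual} I would run the conjugate argument inside $\hap^*$. To disprove that a given $x^*\in S_{\hap^*}$ is a delta-point it suffices to exhibit one good slice, and I would use a weak$^*$-slice: choosing $x\in S_{\hap}$ with $x^*(x)>1-\delta$, the set $\{y^*\in B_{\hap^*}:y^*(x)>1-\delta\}$ is a genuine slice of $B_{\hap^*}$ since $x\in\hap\subseteq\hap^{**}$. The dual ball is the closed convex hull $\clonv\bigl(\bigcup_{A\in\mathcal A}B_{\ell_q(A)}\bigr)$ of the $\ell_q$-balls over admissible sets, so $\hap^*$ carries the same type of structure with $p$ and $q$ interchanged, and since $q\in(1,\infty)$ the very same $\ell_q$-duality and Hölder-rigidity apply, now with $x$ in the role of the averaged norming vector. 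In fact both parts should follow from a single lemma about spaces with a $1$-unconditional basis whose norming functionals satisfy the $(p,q)$-Hölder rigidity above, applied once to $\hap$ and once to $\hap^*$. The extra obstacle on the dual side is descriptive: one must read the rigidity off the infimal-convolution formula for $\norm{\cdot}_{\hap^*}$, where the admissible masses are less transparent than in $\hap$ itself.
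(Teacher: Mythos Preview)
Your plan for~\ref{item:p-1-hap} is a direct diameter bound on a hand-built slice, and this is genuinely different from the paper's route. The paper never tries to bound $\norm{x-y}$ over a whole slice. Instead it uses the $M^\infty(x)$ framework of Theorem~\ref{thm:c-infty-finite-no-delta-point} together with \cite[Lemma~2.14]{abrahamsen2020daugavet}: since $\bigcup_{D\in\minfx}\{D(1)\}$ is finite, say $\{D_1(1),\dots,D_k(1)\}$, one sets $x_j^*=\sum_{i\in D_j}x_i^{p-1}e_i^*$ and $x^*=\tfrac1k\sum_j x_j^*$, then uses uniform convexity of $\ell_p$ to force any $z\in S(x^*,\delta/k)$ to satisfy $z_{d_1^j}>\tfrac12 x_{d_1^j}$ for every $j$. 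That is precisely the hypothesis of \cite[Lemma~2.14]{abrahamsen2020daugavet} with $n=1$, $\eta=\tfrac12$, and all the hard work (why a positive lower bound at \emph{one} coordinate of each $D\in\minfx$ already rules out $\norm{x-z}\to2$) lives in that external lemma. Your averaging over $\mathcal A_\rho$ bypasses $M^\infty(x)$ entirely, which is conceptually appealing, but the step you flag is a real obstruction: when $\sum_{i\in E}x_i^p=\infty$ (e.g.\ the binary-tree $x$ with level weights $a_n^p=2^{-n}$), any finite subfamily of $\mathcal A_\rho$ leaves uncovered a near-norming $B$ on whose tail the slice says nothing about $y$, and you have not shown how to absorb this. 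Separately, the sentence ``there $|x_i-y_i|$ cannot reach $2x_i$'' is not correct as written: off $E$ the slice places no constraint on $y_i$, so $|x_i-y_i|$ can be as large as $|x_i|+1$. What is true is the much cruder $\norm{P_B(x-y)}\le\norm{P_Bx}+\norm{P_By}\le(1-\rho)+1$ for $B\notin\mathcal A_\rho$, but that leaves the mixed case (near-norming $B$ only partially covered by the chosen $A_j$'s) unhandled.

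For~\ref{item:p-1-hap-dual} the paper's argument is both shorter and of a different nature than your conjugate-slice idea. After Lemma~\ref{lem:hap_shrinking} (the basis is shrinking, so $(e_i^*)$ is a $1$-unconditional basis of $\hap^*$), the paper shows directly that $M(x^*)=\{\supp(x^*)\}$ for every $x^*\in S_{\hap^*}$; then $|M^\infty(x^*)|\le1$ and Theorem~\ref{thm:c-infty-finite-no-delta-point} finishes. The proof of $M(x^*)=\{\supp(x^*)\}$ is a strict-concavity trick in the bidual: if some $j\in\supp(x^*)$ could be removed without lowering the norm, take $x^{**}\in S_{\hap^{**}}$ supported off $j$ with $x^{**}(x^*-x_j^*e_j^*)=1$, and form $y^{**}=(x_j^*)^{q-1}e_j+(1-(x_j^*)^{q})^{1/p}x^{**}$; a direct computation gives $\norm{y^{**}}\le1$ but $y^{**}(x^*)=(x_j^*)^q+(1-(x_j^*)^q)^{1/p}>1$, a contradiction. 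No infimal-convolution description of the dual norm is needed, and no slice is ever analysed.
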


\begin{proof}[Proof of Theorem~\ref{p_ge_1-ingendelta}~\ref{item:p-1-hap}]
  Let $x \in S_{\hap}$. As in the proof of Theorem
  \ref{thm:c-infty-finite-no-delta-point} we assume without loss of
  generality that $x_i \ge 0$ for all $i$.

  As noted above Theorem~\ref{thm:c-infty-finite-no-delta-point}
  we can find
  $k \in \N$ and $D_j \in \minfx$ for $j = 1,\ldots, k$
  such that
  \begin{equation*}
    \bigcup_{D\in \minfx} \{D(1)\} = \{D_1(1),\ldots,D_k(1)\}.
  \end{equation*}
  For each $j = 1,\ldots, k$ define
  \[
    x_j^* = \sum_{i\in D_j} x_i^{p-1}e_{i}.
  \]
  Let $q > 1$ such that $\frac{1}{p} + \frac{1}{q} = 1$.
  Then by Hölders inequality we have
  \begin{align*}
    |x_j^*(y)|
    & \leq
    \sum_{i \in D_j} | x_{i}^{p-1}| |y_{i}|
    \leq
    \left(\sum_{i\in D_j} (|x_{i}^{p-1}|)^{q} \right)^{1/q}
    \left(\sum_{i\in D_j} |y_i|^{p} \right)^{1/p}
    \\
    & \leq
    \|x\|^{p/q} \|y\| \leq 1
  \end{align*}
  As $x_j^*(x) = 1$ we conclude that $x_j^*\in S_{X^*}$.

  For each $j$ write $D_j = (d^j_i)_{i \in \N}$
  with $d^j_i < d^j_{i+1}$.
  Define
  \begin{equation*}
    \xi_j :=
    (x_{d^j_1}^{p-1},
    x_{d^j_2}^{p-1}, \ldots) \in S_{\ell_q}
  \end{equation*}
  and $T_j : \hap \to \ell_p$ by
  \begin{equation*}
    T_j(\sum_{i=1}^\infty a_i e_i)
    =
    (a_{d^j_1}, a_{d^j_2}, \ldots).
  \end{equation*}
  Note that $\|T_j(x)\|_p = 1$ since $D_j \in \minfx$.
  Let
  $\varepsilon := \frac{1}{2} \min_{j} x_{d^j_1} > 0$.
  Clarkson \cite{MR1501880} showed that $\ell_p$
  is uniformly convex so
  there exists $\delta_j > 0$ such that if
  \[
    y \in
    S(\xi_j , \delta_j)
    \subseteq B_{\ell_p}
  \]
  then $\|T_j(x) - y\|_p < \varepsilon$.

  Define $\delta := \min_{j} \delta_j$ and
  $x^* := \frac{1}{k}\sum_{j=1}^k x_j^*$.
  We have $x^*(x) = 1$ and
  if $z\in S( x^*, \frac{\delta}{k})$, then
  \[
    T_j(z) \in S(\xi_j, \delta) \subseteq S(\xi_j, \delta_j),
  \]
  hence
  $|x_{d^j_1} - z_{d^j_1}| \le \|T_j(x) - T_j(z)\| < \varepsilon$.
  By definition of $\varepsilon$ we get
  $z_{d^j_1} \geq \frac{1}{2} x_{d^j_1} > 0$.
  Applying \cite[Lemma~2.14]{abrahamsen2020daugavet}
  with $x^*$, $\delta/k$, $\eta = \frac{1}{2}$ and $n=1$ the result follows.
\end{proof}

For the proof that $\hap^*$ does not
have delta-points we first need to show that the
standard basis is shrinking.

\begin{lem}\label{lem:hap_shrinking}
  Let $\mathcal{A}$ be an adequate family and
  let $1 < p < \infty$.
  Then the standard basis $(e_i)_{i \in \N}$
  of $\hap$ is shrinking.
  In particular, $\hap^*$ is separable
  and has an unconditional basis.
\end{lem}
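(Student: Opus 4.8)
The plan is to verify the shrinking property directly, exploiting the fact that the norm on $\hap$ yields an upper $\ell_p$-estimate on successive blocks. The key observation is that if $y_1,\dots,y_n \in \hap$ have pairwise disjoint supports, then
\[
  \Bigl\|\sum_{j=1}^n y_j\Bigr\|
  \le \Bigl(\sum_{j=1}^n \|y_j\|^p\Bigr)^{1/p}.
\]
Indeed, for any $A \in \mathcal{A}$ the set $A \cap \supp(y_j)$ lies in $\mathcal{A}$ by heredity, so $\sum_{i \in A \cap \supp(y_j)} |e_i^*(y_j)|^p \le \|y_j\|^p$; summing over $j$ (the supports being disjoint) and taking the supremum over $A$ yields the estimate. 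For $p > 1$ this is strictly stronger than the triangle inequality, and it is what drives the argument.

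Recall that $(e_i)$ is shrinking precisely when $\lim_{N} \sup\{\, x^*(x) : x \in B_{\hap},\ \supp(x) \subseteq \{N+1,N+2,\dots\} \,\} = 0$ for every $x^* \in S_{\hap^*}$. Suppose this fails: then there are $x^* \in S_{\hap^*}$, $\eps > 0$ and vectors in $B_{\hap}$ with arbitrarily late support on which $x^*$ is at least $\eps$. Since $c_{00}$ is dense in $\hap$ and $x^*$ is continuous, I would truncate these vectors to finite support (costing at most $\eps/2$ in the value of $x^*$) and, by a gliding-hump selection, extract a successive, hence disjointly supported, block sequence $(y_j)_{j}$ with $\|y_j\| \le 1$ and $x^*(y_j) \ge \eps/2$ for all $j$.

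Applying the upper $\ell_p$-estimate to $y_1,\dots,y_n$ gives $\|\sum_{j=1}^n y_j\| \le n^{1/p}$, while on the other hand $x^*\bigl(\sum_{j=1}^n y_j\bigr) = \sum_{j=1}^n x^*(y_j) \ge n\eps/2$. Since $x^* \in S_{\hap^*}$ this forces $n\eps/2 \le n^{1/p}$, that is $\eps/2 \le n^{1/p-1}$ for all $n$; as $p > 1$ the right-hand side tends to $0$, a contradiction. Hence $(e_i)$ is shrinking (equivalently, via the James criterion quoted in Section~\ref{sec:preliminaries}, $\hap$ contains no isomorphic copy of $\ell_1$). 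The step requiring the most care is the gliding-hump extraction of \emph{genuinely} disjointly supported blocks from vectors that may have infinite support; everything after that is the short computation above.

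For the final assertions: once $(e_i)$ is shrinking, $(e_i^*)_{i}$ is a Schauder basis of $\hap^*$, so $\hap^*$ is separable. Moreover the biorthogonal functionals of a $1$-unconditional basis are themselves $1$-unconditional, so $(e_i^*)_{i}$ is an unconditional basis of $\hap^*$.
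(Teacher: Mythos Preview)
Your argument is correct and rests on the same inequality the paper uses, namely the upper $\ell_p$-estimate $\bigl\|\sum_j y_j\bigr\| \le (\sum_j \|y_j\|^p)^{1/p}$ for disjointly supported blocks. The difference is only in how shrinking is extracted from this estimate. The paper packages the estimate as a norm-one operator $S:\ell_p\to\hap$ sending the unit vectors of $\ell_p$ onto a given normalized block sequence; since $S$ is weak--weak continuous and the $\ell_p$ basis is weakly null, every normalized block sequence in $\hap$ is weakly null, and the paper then invokes the standard criterion (\cite[Proposition~3.2.7]{MR2192298}) that this characterizes shrinking bases. You instead run the direct gliding-hump contradiction against the definition of shrinking. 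Your route is more self-contained (no external reference needed beyond the definition), while the paper's is shorter once the Albiac--Kalton proposition is granted; neither has any real advantage, and the gliding-hump step you flag as delicate is routine here because the basis is $1$-unconditional, so the tail projections $P_{\{N+1,\dots,M\}}$ are norm-one and one can truncate without leaving the unit ball.
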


\begin{proof}
  Let $(x_n)_{n \in \N}$ be a normalized block basis of
  $(e_i)_{i \in \N}$.
  This means that there is a sequence $1 \le p_1 < p_2 < \cdots$
  and coefficients $(a_i^n)$ such that
  $x_n = \sum_{i = p_n}^{p_{n+1} - 1} a_i^n e_i$
  satisfies $\|x_n\| = 1$.

  Define an operator $S : \ell_p \to \hap$ by
  \begin{equation*}
    S((\lambda_n)) = \sum_{n=1}^\infty \lambda_n x_n.
  \end{equation*}
  We have that $S$ is a bounded linear operator.
  Indeed, for $(\lambda_n) \in \ell_p$ and
  $A \in \mathcal{A}$ we define $A_n = A \cap \supp(x_n)$.
  Then we have
  \begin{equation*}
    \sum_{i \in A} |S((\lambda_n))_i|^p
    =
    \sum_{n=1}^\infty \sum_{i \in A_n} |\lambda_n a_i^n|^p
    =
    \sum_{n=1}^\infty |\lambda_n|^p \sum_{i \in A_n} |a_i^n|^p
    \le
    \sum_{n=1}^\infty |\lambda_n|^p \|x_n\|^p
  \end{equation*}
  and hence $\|S\| \le 1$.
  If $(f_n)$ denotes the standard basis in $\ell_p$,
  then $S(f_n) = x_n$.
  Since $S$ is weak--weak continuous
  we get that $(x_n)$ is weakly null since $(f_n)$ is
  weakly null in $\ell_p$.
  By \cite[Proposition~3.2.7]{MR2192298}
  $(e_i)_{i \in \N}$ is shrinking.
\end{proof}

\begin{proof}[Proof of
  Theorem~\ref{p_ge_1-ingendelta}~\ref{item:p-1-hap-dual}]
  By Lemma~\ref{lem:hap_shrinking} the standard basis
  $(e_i)_{i \in \N}$ for $X := \hap$ is shrinking
  and hence the biorthogonal functionals $(e_i^*)_{i \in \N}$
  is an 1-unconditional basis for $X^*$.
  Furthermore, we know that $X^{**}$ is a sequence space
  and that for $x^{**} = (a_i)_{i \in \N} \in X^{**}$
  we have $\|x^{**}\| = \sup_N \|P_N x^{**}\|$
  where $P_N(x^{**}) = \sum_{i=1}^N a_i e_i \in X$.

  Let $x^* = (x_j^*)_{j \in \N} \in S_{X^*}$.
  Without loss of generality we may assume that
  $x_j^* \geq 0$ for each $j$.
  By Theorem~\ref{thm:c-infty-finite-no-delta-point}
  it is enough to show that $M^\infty(x^*)$ is finite.
  To this end we will show that $M(x^*) = \left\{\supp(x^*) \right\}$.

  Assume for contradiction that there exists $j \in \supp(x^*)$ such that
  $\|x^*-x_j^*e_j^* \| = 1$.
  Find $x^{**} \in S_{X^{**}}$ such that
  $x^{**}(x^*-x_j^*e_j^* ) = 1$
  and $x^{**}(e_k^*) = 0$ for all $k \notin \supp(x^*-x_j^*e_j^*)$.
  Let $y^{**} = {x_j^*}^{q-1}e_j + ({1 - {x_k^*}^{q}})^{1/p}x^{**}$.
  For $A \in \mathcal{A}$ and $N \in \N$ denote
  $A_N = A \cap \{1,\ldots,N\}$.
  We have
  \begin{align*}
    \left\|P_N y^{**} \right\|^{p}
    & = \sup_{A\in \mathcal{A}} \sum_{i\in A_N} |y_i^{**}|^p
    \\
    & \leq ({{x_j^*}^{q-1}  })^{p} + \left(1-{x_{j}^*}^{q} \right)
      \sup_{A\in \mathcal{A}} \sum_{i\in A_N}|{x_i^{**}}|^{p}
    \\
    & \leq {x_{j}^{*}}^{q} + \left(1-{x_{j}^{*}}^{q} \right) = 1.
  \end{align*}
  This yields $\|y^{**}\| = \sup_{N}\|P_N y^{**}\| \le 1$
  and since $x^*\in S_{X^*}$
  we arrive at the contradiction
  \[
    y^{**}(x^*) = {x_j^*}^{q}+ \left(1-{x_{j}^*}^{q} \right)^{1/p} > {x_j^*}^{q}+
    \left(1-{x_{j}^*}^{q} \right) = 1.
  \]
  Hence we can leave no index behind and $M(x^*) = \{\supp(x^*)\}$.
\end{proof}

\section{The case $p = 1$}
\label{sec:case-p-er-lik-1}

We now turn to $\ha$ spaces. In this case the situation
is not as clear as for $p > 1$.
Let us first note that the extreme points of the dual
space has a well-known characterization.

\begin{lem}[Lemma~2.3 in \cite{MR1226181}]\label{lem-ext-char-AM}
  Let $\A$ be an adequate family of $\N$.
  Then
  \begin{equation*}
    \ext B_{\ha^*}
    =
    \left\{
      \sum_{i\in A} \varepsilon_i e_i^* :
      A \in \mathcal{A}^{\text{MAX}},
      \varepsilon_i \in \{-1,1\}
    \right\}.
  \end{equation*}
\end{lem}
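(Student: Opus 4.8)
The plan is to characterize the extreme points of $B_{\ha^*}$ by combining the $1$-unconditional structure of the basis with the specific form of the norm on $\ha$. Since $(e_i)_{i\in\N}$ is a normalized $1$-unconditional basis for $\ha$, the dual ball $B_{\ha^*}$ is symmetric under sign changes of coordinates, so I expect extreme points to have all coordinates equal to $\pm 1$ on their support and $0$ elsewhere. The support of a candidate extreme point should be forced to be a maximal set $A\in\mathcal{A}^{\text{MAX}}$: if the support could be enlarged within $\mathcal{A}$, the functional would fail to be extreme, and if it properly contained a set not in $\mathcal{A}$, the norm computation would exceed $1$.

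First I would compute the dual norm explicitly. For $x^*=(b_i)_{i\in\N}\in\ha^*$, using that $\norm{\sum a_i e_i}=\sup_{A\in\mathcal{A}}\sum_{i\in A}|a_i|$ (the $p=1$ case), I would show by a duality argument that $\norm{x^*}=\sup\{\,|\sum b_i a_i| : \norm{\sum a_i e_i}\le 1\,\}$ reduces to testing against vectors supported on single sets $A\in\mathcal{A}$, yielding something like $\norm{x^*}=\sup_{A\in\mathcal{A}}\max_{i\in A}|b_i|$ if I recall correctly — more carefully, since on a fixed $A$ the norm is $\ell_1$, the dual restricted to $\spann\{e_i:i\in A\}$ is $\ell_\infty$, and the global dual norm is the supremum of these $\ell_\infty$ norms over $A\in\mathcal{A}$. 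I would verify that this supremum equals $\sup_i |b_i|$ when $\supp(x^*)$ meets the adequacy constraints, and pin down exactly when $\norm{x^*}=1$.

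Next I would prove the two inclusions. For the easy direction, I would take $A\in\mathcal{A}^{\text{MAX}}$ and $\varepsilon_i\in\{-1,1\}$, check that $y^*:=\sum_{i\in A}\varepsilon_i e_i^*$ has norm $1$, and show it is extreme: if $y^*=\tfrac12(u^*+v^*)$ with $u^*,v^*\in B_{\ha^*}$, then on each coordinate $i\in A$ the value $\varepsilon_i$ is an extreme point of $[-1,1]$, forcing $u_i^*=v_i^*=\varepsilon_i$; and maximality of $A$ together with the norm constraint would force $u^*,v^*$ to vanish off $A$, so $u^*=v^*=y^*$. For the reverse direction, given an extreme $x^*\in\ext B_{\ha^*}$, I would use the $1$-unconditionality to argue each nonzero coordinate has modulus $1$ (otherwise a coordinate perturbation keeps both perturbed functionals in the ball), and then show $\supp(x^*)\in\mathcal{A}^{\text{MAX}}$: it lies in $\mathcal{A}$ by compactness/heredity combined with the norm bound, and it must be maximal since any proper extension within $\mathcal{A}$ would let me split $x^*$ nontrivially.

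The main obstacle will be the reverse inclusion, specifically showing that the support of an extreme point is both contained in $\mathcal{A}$ and maximal there. Establishing $\supp(x^*)\in\mathcal{A}$ requires care: a priori the support could be infinite, and I would need adequacy condition (iii) (compactness: every finite subset in $\mathcal{A}$ implies the whole set is) together with the fact that if some finite $F\subseteq\supp(x^*)$ were not in $\mathcal{A}$ then $\norm{x^*}>1$. Maximality then follows by a splitting argument, but the infinite-support case demands that I handle the interplay between the supremum in the dual-norm formula and the possibility that the supremum is not attained. Since this is a known result quoted from de Reyna--Abad--Mendoza \cite{MR1226181}, I would expect the cleanest route is to reproduce their argument adapted to the $1$-unconditional-basis language already set up in the excerpt, leaning on Lemma~\ref{lem-ext-char-AM}'s hypotheses rather than reproving the duality from scratch.
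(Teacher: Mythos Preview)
The paper does not prove this lemma at all: it is quoted verbatim as Lemma~2.3 from Argyros and Mercourakis \cite{MR1226181} (not de~Reyna--Abad--Mendoza, as you wrote) and used as a black box. So there is no ``paper's own proof'' to compare against.

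That said, your sketch has the right architecture (two inclusions, coordinate-wise extremality forcing $\pm 1$ values, maximality forcing no enlargement), but there are two genuine weak spots you would have to repair. First, your dual-norm computation is wrong: you claim the dual norm is a supremum of $\ell_\infty$-norms over $A\in\mathcal A$, which would just give $\sup_i|b_i|$; but already for $\mathcal A=\{\emptyset\}\cup\{\{i\}:i\in\N\}$ we have $\ha=c_0$ and $\ha^*=\ell_1$, not $\ell_\infty$. What is true (and what you actually need) is that $\bigl\|\sum_{i\in B}\varepsilon_i e_i^*\bigr\|\le 1$ precisely when every finite subset of $B$ lies in $\mathcal A$, hence (by compactness) when $B\in\mathcal A$; the test vector $x=\sum_{i\in F}\varepsilon_i e_i$ on a finite $F\subset B$ with $F\notin\mathcal A$ gives $\|x\|=\max_{A\in\mathcal A}|A\cap F|<|F|$ while $x^*(x)=|F|$. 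Second, in the ``easy'' inclusion you assume $u^*,v^*$ are sequences and argue coordinate by coordinate, but when $\mathcal A$ contains an infinite set the basis of $\ha$ is not shrinking (Proposition~\ref{prop:ha-poly}), so $\ha^*$ is strictly larger than the closed span of $(e_i^*)$ and $u^*,v^*$ need not be determined by their values on basis vectors. Forcing $u^*=v^*$ off $A$ therefore requires an argument that genuinely uses maximality of $A$ against arbitrary functionals, e.g.\ via a vector of the type constructed in the proof of Proposition~\ref{prop:ext-er-exposed}.
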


In \cite[Theorem~3.1]{abrahamsen2020daugavet} it was shown that if we
build an adequate family on $\N$ by using the branches of
the binary tree, then we get an $\ha$ space with delta-points.
This adequate family contains many infinite sets.
We have the following general result about duals of $\ha$ spaces
when the adequate family contains an infinite set.

\begin{prop}\label{prop:infinite-set-delta}
  Let $\mathcal{A}$ be an adequate family that contains
  an infinite set.
  Then there exists $x^*\in \ha^*$
  such that $x^*$ is an extreme point and a delta-point.
\end{prop}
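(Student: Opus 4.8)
The plan is to exhibit an explicit extreme point $x^*$ of $B_{\ha^*}$ that is a delta-point, using the characterization in Lemma~\ref{lem-ext-char-AM}. Let $A_0 \in \mathcal{A}$ be an infinite set guaranteed by hypothesis. By Zorn's lemma and compactness of $\mathcal{A}$, I may extend $A_0$ to a maximal set $A \in \mathcal{A}^{\text{MAX}}$, which is still infinite. Set
\begin{equation*}
  x^* = \sum_{i \in A} e_i^*.
\end{equation*}
By Lemma~\ref{lem-ext-char-AM} this is an extreme point of $B_{\ha^*}$ (take all signs $\varepsilon_i = +1$). The remaining task is to verify that $x^*$ is a delta-point, which by Lemma~\ref{lem:Delta_point_crit}\ref{item:...} (the delta-point criterion) means: for every slice $S(y, \delta)$ of $B_{\ha^*}$ with $x^* \in S(y, \delta)$ and every $\varepsilon > 0$, I must find $z^* \in S(y, \delta)$ with $\|x^* - z^*\| \geq 2 - \varepsilon$.

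\textbf{The slicing argument.}
Since $\ha$ is not reflexive when $\mathcal{A}$ contains an infinite set, slices of $B_{\ha^*}$ are determined by elements of $\ha$ together with, potentially, functionals not coming from $\ha$; but for the delta-point criterion I only need to handle slices of the form $S(y, \delta)$ where $y$ ranges over $S_{\ha^{**}}$, and the key point is that $x^* \in S(y,\delta)$ means $\langle y, x^*\rangle > 1 - \delta$. First I would reduce to the case where $y \in S_{\ha}$ is finitely supported: since such $y$ are norming and the condition $x^* \in S(y,\delta)$ is open, a density argument lets me assume $y = \sum_{i} y_i e_i$ has finite support. Then $\langle y, x^*\rangle = \sum_{i \in A} y_i > 1 - \delta$. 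The crucial observation is that $A$ is infinite while $\supp(y)$ is finite, so I can pick an index $j \in A \setminus \supp(y)$. The candidate perturbation is to flip the sign on coordinate $j$: define
\begin{equation*}
  z^* = x^* - 2 e_j^* = \sum_{i \in A,\, i \neq j} e_i^* - e_j^*.
\end{equation*}
Since $A \in \mathcal{A}^{\text{MAX}}$ and flipping a sign keeps it an extreme point, $z^* \in \ext B_{\ha^*} \subseteq B_{\ha^*}$. Because $j \notin \supp(y)$ we have $\langle y, z^*\rangle = \langle y, x^*\rangle > 1 - \delta$, so $z^* \in S(y,\delta)$. Finally $x^* - z^* = 2 e_j^*$, and since $\{j\} \in \mathcal{A}$ always holds, $\|x^* - z^*\| = 2\|e_j^*\| = 2 \geq 2 - \varepsilon$. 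This completes the verification for finitely supported $y$.

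\textbf{Handling general slices.}
The main obstacle is making the reduction to finitely supported $y$ fully rigorous, since a priori a slice could be given by a genuinely infinitely-supported functional in $\ha^{**}$ or by a norming element of $\ha$ that is not finitely supported. I expect to resolve this by noting that the finitely supported elements of $\ha$ are norm-dense in $B_{\ha}$, and that elements of $\ha$ (equivalently, of the predual action) suffice to test the delta-point condition because the relevant slices of $B_{\ha^*}$ in the definition are weak-star slices, i.e.\ induced by $y \in \ha$. Given any $y \in S_{\ha}$ with $\langle y, x^*\rangle > 1 - \delta$, I approximate $y$ by a finitely supported $y'$ with $\|y - y'\|$ small enough that $\langle y', x^*\rangle > 1 - \delta$ still holds and $|\langle y' - y, z^*\rangle|$ is controlled; then the sign-flip construction above, performed relative to $\supp(y')$, produces the desired $z^*$ in the original slice up to adjusting $\delta$ slightly. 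The geometric heart of the argument—that an infinite maximal set always leaves ``room'' to flip one coordinate outside any finite support while staying in the slice and moving distance exactly $2$—is clean; the only care needed is the approximation bookkeeping, which is routine.
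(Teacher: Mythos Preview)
Your sign-flip construction is exactly the paper's idea, but the reduction to finitely supported (or even to $\ha$-) slicing functionals is a genuine gap. The delta-point criterion for $x^*\in S_{\ha^*}$ involves \emph{all} slices of $B_{\ha^*}$, that is, slices $S(x^{**},\delta)$ with $x^{**}\in S_{\ha^{**}}$; these are not weak-star slices, and there is no general principle saying that predual (weak-star) slices suffice. Since $\mathcal A$ contains an infinite set, the basis of $\ha$ is not shrinking, so $\ha^{**}\neq\ha$ and the reduction you sketch is unjustified. Your approximation argument only shows that $(e^*_{a_i})$ is weak-star null in $\ha^*$, which is not enough to place the perturbed point $z^*=x^*-2e^*_{a_i}$ inside an arbitrary slice coming from $\ha^{**}$.

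The missing observation (and this is how the paper closes the gap) is that $(e^*_{a_i})_{i}$ is in fact \emph{weakly} null in $\ha^*$. For any $x^{**}\in S_{\ha^{**}}$, the element $y^*=\sum_i \sgn(x^{**}(e^*_{a_i}))\,e^*_{a_i}$ lies in $B_{\ha^*}$ by Lemma~\ref{lem-ext-char-AM} (since $A\in\mathcal A$), and hence $\sum_i |x^{**}(e^*_{a_i})|=x^{**}(y^*)\le 1$, forcing $x^{**}(e^*_{a_i})\to 0$. With this in hand your sign-flip works directly for every slice from $\ha^{**}$, and no approximation is needed.
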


\begin{proof}
  As $\mathcal{A}$ contains an infinite set there exist
  $A \in \mathcal{A}^{\text{MAX}}$ with $|A| = \infty$.
  Write $A = (a_i)_{i=1}^\infty$ with $a_i < a_{i+1}$
  for all $i$.
  For $x^{**} \in S_{\ha^{**}}$ we have
  $y^* =
  \sum_{i=1}^\infty \sgn(x^{**}(e_{a_i}^*)) e_{a_i}^*
  \in B_{\ha^*}$
  by Lemma~\ref{lem-ext-char-AM}.
  Hence
  \begin{equation*}
    \sum_{i=1}^\infty |x^{**}(e_{a_{i}}^*)|
    = x^{**}(y^*) \le 1
  \end{equation*}
  and this implies that $(e_{a_i}^*)_{i=1}^\infty$ is weakly null.

  Now let $x^* = \sum_{i\in A} e_{i}^* \in \ext B_{\ha^*}$.
  Let $x^{**} \in S_{\ha^{**}}$ and $\varepsilon > 0$
  with $x^* \in S(x^{**},\varepsilon)$.
  Define $x_{i}^* = x^* - 2e_{a_i}^* \in \ext B_{\ha^*}$
  so that $\|x^* - x_{i}^*\|  = \|2e_{a_i}^*\| = 2$.
  For $i$ large enough we have $x_i^* \in S(x^{**},\varepsilon)$
  and thus $x$ is a delta-point.
\end{proof}

We shall henceforth exclusively consider $\ha$ spaces
where the adequate family consists of finite sets only.
Recall that a Banach spaces is said to be \emph{polyhedral} if the unit ball
of each of its finite-dimensional subspaces is a polyhedron
\cite{KleePolyhedralSections}.

The following result is well-known but we include
a proof for easy reference.

\begin{prop}\label{prop:ha-poly}
  Let $\A$ be an adequate family of subsets of $\N$.
  The following are equivalent:
  \begin{enumerate}
  \item\label{item:ha-poly-1}
    $\A$ consists of finite sets only;
  \item\label{item:ha-poly-2}
    $\ha$ is polyhedral;
  \item\label{item:ha-poly-3}
    The standard basis $(e_i)_{i \in \N}$ is shrinking.
  \end{enumerate}
\end{prop}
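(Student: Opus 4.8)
The plan is to prove the three equivalences by establishing a cycle of implications, with the heart of the matter being the passage between the combinatorial condition on $\mathcal A$ and the geometric/functional-analytic conditions. I would organize it as \ref{item:ha-poly-1} $\Rightarrow$ \ref{item:ha-poly-3} $\Rightarrow$ \ref{item:ha-poly-2} $\Rightarrow$ \ref{item:ha-poly-1}, since the contrapositive of the last implication is the cleanest way to detect an infinite member of $\mathcal A$.

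First I would prove \ref{item:ha-poly-1} $\Rightarrow$ \ref{item:ha-poly-3}. The cleanest route mirrors Lemma~\ref{lem:hap_shrinking}: to show $(e_i)$ is shrinking it suffices, by James's theorem for $1$-unconditional bases quoted in Section~\ref{sec:preliminaries}, to rule out an isomorphic copy of $\ell_1$; equivalently I would show every normalized block basis $(x_n)$ is weakly null. Fix $x^* \in S_{\ha^*}$; by Lemma~\ref{lem-ext-char-AM} and convexity it is enough to test against extreme points $x^* = \sum_{i \in A}\varepsilon_i e_i^*$ with $A \in \mathcal{A}^{\text{MAX}}$, hence $A$ \emph{finite}. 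Since the blocks $x_n$ have disjoint, rightward-marching supports, only finitely many of them meet the finite set $A$, so $x^*(x_n) \to 0$. This forces $(x_n)$ to be weakly null and gives shrinkingness.

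Next, \ref{item:ha-poly-3} $\Rightarrow$ \ref{item:ha-poly-2}. When the basis is shrinking, $\ha^*$ is the closed span of the biorthogonal functionals, and Lemma~\ref{lem-ext-char-AM} describes $\ext B_{\ha^*}$ as the finitely-supported sign vectors over maximal sets. The key geometric fact is that these extreme functionals are uniformly separated from $S_{\ha}$ in the sense required by polyhedrality: for a norming functional of a finite-dimensional section, only finitely many extreme points of $B_{\ha^*}$ are relevant because a finite-dimensional subspace $Y$ sees only finitely many coordinates to within $\varepsilon$, and shrinkingness means the $w^*$-limit points of $\ext B_{\ha^*}$ do not norm any $y \in S_Y$. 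I would invoke the standard characterization (Klee/Fonf) that a separable space whose dual unit ball has $w^*$-strongly exposed-like extreme structure with no norming limit points is polyhedral; concretely, $B_Y$ is cut out by finitely many of the functionals $e_i^*$ restricted to $Y$, so it is a polytope.

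Finally, \ref{item:ha-poly-2} $\Rightarrow$ \ref{item:ha-poly-1}, which I would prove by contraposition: if $\mathcal A$ contains an infinite set $A = (a_i)$, then by heredity and compactness $\clonv\{e_{a_i}\}$ sits isometrically as a copy of (the positive face of) $c_0$ inside $\ha$, since $\|\sum_{i} \lambda_i e_{a_i}\| = \sup_i |\lambda_i|$ for finitely supported $\lambda$. A space containing an isometric, and in particular isomorphic, copy of $c_0$ cannot be polyhedral because $c_0$ is not polyhedral in any infinite-dimensional renorming-stable sense---more carefully, I would extract a two-dimensional obstruction or cite that polyhedrality passes to subspaces and that $c_0$ fails it. The main obstacle I anticipate is making \ref{item:ha-poly-3} $\Rightarrow$ \ref{item:ha-poly-2} rigorous: translating shrinkingness into the precise finite-facet statement requires care about which functionals can norm a given finite-dimensional section, and the honest argument likely goes through the Fonf--Veselý machinery (or a direct compactness argument showing $w^*$-cluster points of $\ext B_{\ha^*}$ lie in the open unit ball's polar complement), rather than a one-line appeal.
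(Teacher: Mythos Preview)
Your implication \ref{item:ha-poly-2} $\Rightarrow$ \ref{item:ha-poly-1} contains a genuine error. When $A\in\mathcal A$ is infinite, the closed linear span of $(e_{a_i})$ in $\ha$ is an isometric copy of $\ell_1$, \emph{not} of $c_0$: since every finite subset of $A$ belongs to $\mathcal A$ by heredity, for finitely supported $(\lambda_i)$ one has $\bigl\|\sum_i\lambda_i e_{a_i}\bigr\|=\sum_i|\lambda_i|$, not $\sup_i|\lambda_i|$. Moreover the conclusion you draw is backwards: $c_0$ \emph{is} polyhedral (indeed it is the prototype of a polyhedral space), so even had the span been $c_0$ there would be no contradiction. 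The contrapositive you want is exactly the one the paper uses for \ref{item:ha-poly-3} $\Rightarrow$ \ref{item:ha-poly-1}: an infinite $A\in\mathcal A$ yields an isometric copy of $\ell_1$, hence the unconditional basis is not shrinking; and for a direct \ref{item:ha-poly-2} $\Rightarrow$ \ref{item:ha-poly-1} one would invoke Fonf's result that separable polyhedral spaces are $c_0$-saturated and therefore cannot contain $\ell_1$.

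Your implication \ref{item:ha-poly-3} $\Rightarrow$ \ref{item:ha-poly-2} is also problematic as a stand-alone step. Shrinkingness is far weaker than polyhedrality (for instance $\ell_2$ has a shrinking basis and is not polyhedral), so any honest argument must at some point exploit the specific combinatorial structure of $\ha$---in effect passing through \ref{item:ha-poly-1}. The paper avoids this detour by running the cycle in the order \ref{item:ha-poly-1} $\Rightarrow$ \ref{item:ha-poly-2} $\Rightarrow$ \ref{item:ha-poly-3} $\Rightarrow$ \ref{item:ha-poly-1}, citing the (V)-polyhedrality result of Antunes--Beanland--Chu for the first implication and Fonf's $c_0$-saturation for the second. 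Your argument for \ref{item:ha-poly-1} $\Rightarrow$ \ref{item:ha-poly-3} via testing block bases against the (finitely supported) extreme functionals is a valid alternative route for that piece, but the rest of the cycle needs the reorganization above.
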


\begin{proof}
  \ref{item:ha-poly-1} $\implies$ \ref{item:ha-poly-2}
  Note that the proof of Theorem 4.5 (see also Remark~4.4) \cite{le2019geometry} 
  also holds for adequate families
  of finite sets, not just for regular families,
  so that $\ha$ is (V)-polyhedral in the sense
  Fonf and Veselý \cite{MR2057283}.
  In particular, $\ha$ is polyhedral.

  \ref{item:ha-poly-2} $\implies$ \ref{item:ha-poly-3}
  If $\ha$ is polyhedral, then it is $c_0$ saturated
  by \cite{MR595744} (see the remark following Theorem~5).
  Hence $\ha$ cannot contain an isomorphic copy of $\ell_1$
  and this yields that
  $(e_i)_{i \in \N}$ is shrinking since it is unconditional.

  \ref{item:ha-poly-3} $\implies$ \ref{item:ha-poly-1}.
  Assume $\mathcal{A}$ contains an infinite set $A$.
  Then the basis vectors $(e_i)_{i \in A}$ span
  an isometric copy of $\ell_1$ in $\ha$ and
  $(e_i)_{i \in \N}$ is not shrinking.
\end{proof}

We will have more to say about polyhedrality in $\ha$ spaces
below in Section~\ref{sec:polyhedrality}.

Next we note that extreme points in $\ha^*$ are actually $w^*$-exposed.

\begin{prop}\label{prop:ext-er-exposed}
  Let $\mathcal{A}$ be an adequate family of $\N$
  and $x^* \in \ext B_{\ha^*}$.
  Then the following are equivalent:
  \begin{enumerate}
  \item\label{item:prop-ext-1}
    $x^*$ is an extreme point of $B_{X^*}$;
  \item\label{item:prop-ext-2}
    $x^*$ is a $w^*$-exposed point of $B_{X^*}$.
  \end{enumerate}
  In particular, if $\mathcal{A}$ is an adequate family of finite
  sets, then $x^*$ is an extreme point
  if and only if $x^*$ is a $w^*$-strongly exposed point.
\end{prop}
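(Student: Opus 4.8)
The implication (ii)$\implies$(i) is immediate, since a $w^*$-exposed point is in particular exposed and hence extreme; as the hypothesis already guarantees $x^*\in\ext B_{\ha^*}$, the substance of the statement is to produce, for each such $x^*$, a $w^*$-continuous functional (that is, an element of $\ha$) that exposes it. By Lemma~\ref{lem-ext-char-AM} I may write $x^*=\sum_{i\in A}\varepsilon_i e_i^*$ with $A=\{a_1<a_2<\cdots\}\in\mathcal A^{\text{MAX}}$ and $\varepsilon_i\in\{-1,1\}$. The plan is to take $x=\sum_k \varepsilon_{a_k}c_k e_{a_k}$, where $(c_k)$ are strictly positive, strictly decreasing weights with $\sum_k c_k=1$ (e.g.\ $c_k=2^{-k}$; if $A$ is finite, any strictly positive finite convex weight vector works). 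Since $A\in\mathcal A$, taking the defining supremum at $B=A$ shows $\|x\|=\sum_k c_k=1$, and clearly $x^*(x)=\sum_k c_k=1$.

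For $w^*$-exposedness I would argue through the face $F=\{y^*\in B_{\ha^*}:y^*(x)=1\}$, which is the set where $x$ attains its maximum $1=\|x\|$ over $B_{\ha^*}$. This is a nonempty $w^*$-closed, hence $w^*$-compact, convex subset of $B_{\ha^*}$, and since it is a face, $\ext F=F\cap\ext B_{\ha^*}$. For an extreme point $y^*=\sum_{i\in C}\delta_i e_i^*$ with $C\in\mathcal A^{\text{MAX}}$ one computes $y^*(x)=\sum_{a_k\in C}\delta_{a_k}\varepsilon_{a_k}c_k\le\sum_{a_k\in C}c_k\le 1$; because all $c_k>0$, equality forces $A\subseteq C$ and $\delta_{a_k}=\varepsilon_{a_k}$ for every $k$, and maximality of both $A$ and $C$ then gives $C=A$ and $y^*=x^*$. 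Thus $x^*$ is the only extreme point of $B_{\ha^*}$ lying in $F$, so by Krein--Milman $F=\overline{\conv}^{w^*}(\ext F)=\{x^*\}$; that is, $x$ exposes $x^*$.

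For the last assertion, assume $\mathcal A$ consists of finite sets, so $A=\{a_1,\dots,a_n\}$ is finite, and take the uniform choice $x=\tfrac1n\sum_{i\in A}\varepsilon_i e_i$. I would estimate the norm-diameter of the slices $S(x,\delta)=\{y^*\in B_{\ha^*}:y^*(x)>1-\delta\}$ and show it tends to $0$ with $\delta$, which is equivalent to $w^*$-strong exposedness. Writing $y^*=(y_i^*)$, the condition $y^*(x)>1-\delta$ together with $|y_i^*|\le\|y^*\|\le 1$ forces $\varepsilon_i y_i^*>1-n\delta$, hence $|y_i^*-\varepsilon_i|\le\sqrt{2n\delta}$ for each $i\in A$; summing over these $n$ coordinates bounds $\|P_A y^*-x^*\|$ by $n\sqrt{2n\delta}$.

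The main obstacle is the tail $\|P_{A^c}y^*\|$, where $A^c=\N\setminus A$, and this is exactly where finiteness enters. For any $z\in B_{\ha}$ with $\supp z\subseteq A^c$ I would test $y^*$ against $u=\tfrac1n\sum_{i\in A}\varepsilon_i e_i+\tfrac1n z$: using that every $C\in\mathcal A$ with $A\subseteq C$ satisfies $C=A$ by maximality, while every other $C$ meets $A$ in at most $n-1$ points, one checks $\|u\|\le 1$, the crucial gap $\tfrac{n-1}{n}+\tfrac1n\le 1$ being available precisely because $n<\infty$. Then $1\ge y^*(u)=y^*(x)+\tfrac1n y^*(z)>1-\delta+\tfrac1n y^*(z)$ gives $y^*(z)<n\delta$, and taking the supremum over such $z$ (using symmetry $z\mapsto-z$) yields $\|P_{A^c}y^*\|\le n\delta$. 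Combining the two estimates, $\|y^*-x^*\|\le n\sqrt{2n\delta}+n\delta\to0$, so the slices shrink in diameter and $x^*$ is $w^*$-strongly exposed.
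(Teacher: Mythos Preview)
Your argument is correct, but both halves proceed differently from the paper.

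For $w^*$-exposedness, the paper takes the same exposing vector $x$ but then argues \emph{directly} that any $y^*\in S_{\ha^*}$ with $y^*(x)=1$ must equal $x^*$: first $y^*_i=\varepsilon_i$ on $B$, and then for each $j\notin B$ one uses compactness of $\mathcal{A}$ to find a finite $A\subset B$ with $A\cup\{j\}\notin\mathcal{A}$ and builds $\tilde x=x+\sgn(y^*_j)\min_{i\in A}|x_i|\,e_j$, still of norm $1$, which forces $y^*_j=0$. Your route instead checks only the extreme $y^*$'s (those with $C\in\mathcal{A}^{\text{MAX}}$) and then invokes Krein--Milman on the $w^*$-compact face $F=\{y^*:y^*(x)=1\}$, using that $\ext F=F\cap\ext B_{\ha^*}$. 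This is slicker and avoids the coordinate-by-coordinate perturbation, at the cost of appealing to Krein--Milman; the paper's proof is more elementary in that it never leaves the concrete description of $\ha$ and $\ha^*$. Incidentally, ``strictly decreasing'' is never used---strict positivity of the $c_k$ is all you need.

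For the $w^*$-strongly exposed conclusion, the paper simply quotes Fonf's theorem that in a polyhedral space every $w^*$-exposed point of the dual ball is $w^*$-strongly exposed. Your direct slice-diameter estimate is a genuinely different, self-contained argument; the key step $\|u\|\le1$ works exactly as you say, and the resulting bound $\|y^*-x^*\|\le n\sqrt{2n\delta}+n\delta$ is valid (though the coordinate estimate can be sharpened to $|y^*_i-\varepsilon_i|<n\delta$, making the $\sqrt{\cdot}$ unnecessary). The trade-off is that you do more work but avoid importing the polyhedral machinery.
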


\begin{rem}
  We showed above in Proposition~\ref{prop:infinite-set-delta}
  that if the adequate family $\mathcal{A}$ contains
  an infinite set, then there is an extreme point in $\ha^*$
  which is also a delta-point.
  Proposition~\ref{prop:ext-er-exposed} shows that this
  extreme point is also exposed, but being a delta-point
  it is far from being a $w^*$-strongly exposed point.
\end{rem}

\begin{proof}
  One direction is trivial, so we only need to show
  \ref{item:prop-ext-1} $\implies$ \ref{item:prop-ext-2}.
  Let $x^*\in \ext B_{\ha^*}$.
  By Lemma~\ref{lem-ext-char-AM} we can write
  $x^* = \sum_{i\in B} \varepsilon_i e_i^*$ with
  $B \in \mathcal{A}^{\text{MAX}}$
  and $\varepsilon_i \in \{-1,1\}$.
  Find some $y \in S_{\ell_{1}}$  with $\supp(y) = B$ and
  $y_i \geq 0$ for all $i \in \N$.
  Define $x = \sum_{i\in B} y_i\varepsilon_i e_i \in S_{\ha}$
  and notice that $x^*(x) = 1$.

  Take some $y^*\in S_{\ha^*}$ with $y^*(x) = 1$.
  Then $y_i^*=y^*(e_i) = \varepsilon_i$ for all $i \in B$.
  For $j \in \N \setminus B$, there must exist,
  by compactness of $\mathcal{A}$, some finite $A\subset B$
  such that $A \bigcup \left\{j \right\}\notin \mathcal{A}$.
  Define
  \[
    \Tilde x = x+ \sgn
    y_i^*\min_{i\in A}|x_{i}|  e_j,
  \]
  and observe that $\|\Tilde x\| = 1.$ Indeed, take $C \in
  \mathcal{A}$ with $j\in C.$ Then $A\setminus C \not=\emptyset,$ and so
  \[
    \sum_{i\in C} |\Tilde x_i|
     = \sum_{i\in C \cap B} |\Tilde x_i| + |\Tilde x_j|
      \leq \sum_{i\in B} |x_i|- \min_{i\in A} |x_i| + \min_{i \in A}|x_i| = 1.
  \]
  We now get
  \[
    1\ge y^*(\Tilde x )
    =
    y^*(x) + \min_{i\in A}|x_{i}||y_j^*|
    =
    1 + \min_{i\in A}|x_{i}||y_j^*|,
  \]
  implying that $y_j^* = 0$ for all $j\in \N \setminus B$.
  That is, $y^* = x^*$.

  If $\mathcal{A}$ consists of finite sets, then
  $\ha$ is polyhedral and for polyhedral spaces
  the $w^*$-exposed points and $w^*$-strongly exposed
  points of the dual unit ball coincides
  (\cite[Theorem~1.4]{Fonf2000}).
\end{proof}

Let $\mathcal{A}$ be an adequate family and let $x \in S_{\ha}$.
Since $\mathcal{A}$ is compact in the topology of pointwise
convergence in $\N$ there exists $A \in \mathcal{A}$ such
that $\|P_A x\| = \|x\|$. In particular, $\mx \subset \mathcal{A}$.
The following result is now immediate from
Theorem~\ref{thm:c-infty-finite-no-delta-point}
(or \cite[Proposition~2.15]{abrahamsen2020daugavet}).

\begin{prop}
  If $\mathcal{A}$ is an adequate family of finite subsets of $\N$,
  then $\ha$ does not have delta-points.
\end{prop}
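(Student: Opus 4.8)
The plan is to deduce the final proposition directly from Theorem~\ref{thm:c-infty-finite-no-delta-point} by showing that the hypothesis $|\minfx| < \infty$ is satisfied \emph{vacuously} when $\mathcal{A}$ consists of finite sets. The key observation, made in the paragraph preceding the statement, is that for any $x \in S_{\ha}$ there exists $A \in \mathcal{A}$ with $\|P_A x\| = \|x\|$, and consequently every set in $\mx$ is itself an element of $\mathcal{A}$; in symbols, $\mx \subseteq \mathcal{A}$.

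First I would argue that $\minfx$ is empty. Recall that $\minfx = \{A \in \mx : |A| = \infty\}$. If $A \in \minfx$, then in particular $A \in \mx \subseteq \mathcal{A}$, so $A$ is an infinite member of $\mathcal{A}$. But by assumption $\mathcal{A}$ contains only finite sets, a contradiction. Hence $\minfx = \emptyset$, and trivially $|\minfx| = 0 < \infty$ for every $x \in S_{\ha}$.

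With $|\minfx| < \infty$ established for an arbitrary $x \in S_{\ha}$, the ``in particular'' clause of Theorem~\ref{thm:c-infty-finite-no-delta-point} applies verbatim and yields that no $x \in S_{\ha}$ is a delta-point. Since this holds for every unit vector, $\ha$ has no delta-points.

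There is essentially no obstacle here: the proposition is a clean corollary, and the only point requiring care is the justification that $\mx \subseteq \mathcal{A}$, which itself rests on the compactness of $\mathcal{A}$ in the topology of pointwise convergence (guaranteeing the existence of a norming set $A \in \mathcal{A}$). That justification is already supplied in the excerpt immediately above the statement, so the argument is complete once it is invoked. Should one wish to avoid Theorem~\ref{thm:c-infty-finite-no-delta-point} altogether, the alternative cited reference \cite[Proposition~2.15]{abrahamsen2020daugavet} delivers the same conclusion from $\minfx = \emptyset$.
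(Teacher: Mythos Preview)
Your argument is correct and is exactly the approach the paper takes: the paragraph preceding the proposition establishes $\mx \subset \mathcal{A}$, whence $\minfx = \emptyset$ when $\mathcal{A}$ consists of finite sets, and the result is then immediate from the ``in particular'' clause of Theorem~\ref{thm:c-infty-finite-no-delta-point} (or equivalently from \cite[Proposition~2.15]{abrahamsen2020daugavet}).
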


\begin{defn}
  A Banach space $X$ has the
  \emph{convex series representation property} (CSRP)
  if for each $x \in B_X$,
  there exists a sequence $(f_i)$ of extreme points of $B_X$
  and a sequence of nonnegative real numbers $(\lambda_i)$
  such that
  $\sum_{i= 1}^{\infty} \lambda_{i} = 1$ and
  \[
    x = \sum_{i=1}^{\infty} \lambda_i f_i.
  \]
\end{defn}
Note that the CSRP is equivalent to the $\lambda$-property
\cite{AronCSRP}.
The proof of \cite[Proposition 4.3]{le2019geometry}
also holds for adequate families of finite sets
and thus we have the following.

\begin{prop}[Proposition~4.3 in \cite{le2019geometry}]
  \label{prop:ha_characterize_ext_andlambda_property}
  Let $\A$ be an adequate family of finite subsets of $\N$.
  Then $\ha^*$ has the CSRP.
\end{prop}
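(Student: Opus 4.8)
The plan is to adapt the argument of \cite[Proposition~4.3]{le2019geometry}, the only change being that finiteness of the sets in $\A$ (rather than regularity) is what guarantees that the maximal sets are finite and that every $A\in\A$ is contained in some maximal set. Indeed, an increasing chain in $\A$ has a union all of whose finite subsets lie in $\A$, hence by compactness the union lies in $\A$; as $\A$ consists of finite sets the chain must be finite, so Zorn's lemma yields that each $A\in\A$ extends to a member of $\mathcal{A}^{\text{MAX}}$, and every such maximal set is finite. By Lemma~\ref{lem-ext-char-AM} the extreme points of $B_{\ha^*}$ are therefore the finitely supported sign vectors $\sum_{i\in A}\varepsilon_i e_i^*$ with $A\in\mathcal{A}^{\text{MAX}}$. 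Since the diagonal sign change $e_i^*\mapsto\eta_i e_i^*$ is an isometry of $\ha^*$ permuting these extreme points, it suffices to represent an arbitrary $x^*=\sum_j c_j e_j^*\in B_{\ha^*}$ with all $c_j\ge 0$.

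First I would record the covering description of the dual ball. Computing the polar of $B_{\ha}=\bigcap_{A\in\A}\{x:\sum_{i\in A}|x_i|\le 1\}$ shows that $B_{\ha^*}=\overline{\conv}^{\,w^*}\bigcup_{A\in\A}\{y^*:\supp y^*\subseteq A,\ \norm{y^*}_\infty\le 1\}$, which is consistent with the extreme points above. For $x^*\ge0$ with finite support $S=\supp x^*$ this is a finite-dimensional statement, and a Hahn--Banach (linear programming duality) argument turns $\norm{x^*}\le 1$ into a fractional cover: there are finitely many sets $B_1,\dots,B_m\in\A$ with $B_k\subseteq S$ and weights $\lambda_k\ge 0$ such that $\sum_k\lambda_k\le 1$ and $\sum_{k:\,j\in B_k}\lambda_k\ge c_j$ for every $j\in S$.

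The heart of the proof is to upgrade this inequality-cover to an exact convex combination of genuine extreme points. I would first pass from $\ge$ to $=$ by a layer-cake (threshold) decomposition: choosing $\beta_{k,j}\in[0,1]$ supported on $B_k$ with $\sum_k\lambda_k\beta_{k,j}=c_j$, each vector $\sum_j\beta_{k,j}e_j^*$ is a finite convex combination (of total weight $\max_j\beta_{k,j}\le 1$) of indicator vectors $\sum_{j\in C}e_j^*$ of sets $C\subseteq B_k$ in $\A$. Each such indicator is then realized as an average of two honest extreme points: extending $C$ to a maximal set $\widehat C\in\mathcal{A}^{\text{MAX}}$ we have
\begin{equation*}
  \sum_{j\in C}e_j^*=\tfrac12\Bigl(\sum_{j\in\widehat C}\varepsilon_j e_j^*\Bigr)+\tfrac12\Bigl(\sum_{j\in\widehat C}\varepsilon_j' e_j^*\Bigr),
\end{equation*}
where $\varepsilon_j=\varepsilon_j'=1$ on $C$ while $\varepsilon_j=1=-\varepsilon_j'$ on $\widehat C\setminus C$, so that the added coordinates in $\widehat C\setminus C$ cancel. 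Assembling these produces a nonnegative combination of extreme points equal to $x^*$ whose total weight $t'\le 1$; I would finally pad the deficit with the canceling pair $\tfrac{1-t'}{2}g+\tfrac{1-t'}{2}(-g)$ for a fixed extreme point $g$, obtaining an exact convex combination of extreme points summing to $x^*$. This settles the finitely supported case.

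For general $x^*\ge0$ I would reduce to the finitely supported case by a limiting argument, applying the construction to the truncations $P_N x^*\in B_{\ha^*}$ and passing to the limit while using compactness of $\A$ to control the sets involved. The key point legitimizing an infinite convex series is that every extreme point has norm $1$, so a series $\sum_i\lambda_i f_i$ with $\lambda_i\ge 0$ and $\sum_i\lambda_i=1$ converges absolutely in norm; combined with coordinatewise convergence to $x^*$ by construction, this identifies the sum as $x^*$ and gives the CSRP. I expect the main obstacle to be exactly this bookkeeping, namely converting the fractional inequality-cover into an \emph{exact} convex combination of sign vectors living on maximal sets that may overshoot $S$, and ensuring in the infinite-support case that the resulting countable series both converges and reproduces $x^*$.
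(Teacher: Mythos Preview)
The paper supplies no proof here; it merely records that the argument of \cite[Proposition~4.3]{le2019geometry} goes through verbatim once one observes that spreading plays no role and only finiteness of the sets in $\mathcal A$ matters. So there is nothing in the paper to compare your sketch against beyond that one-line remark. Your treatment of the finitely supported case is correct: LP duality produces the fractional cover, the layer-cake decomposition turns it into a sub-convex combination of indicators $\mathbf 1_C$ with $C\in\mathcal A$, the sign trick on a maximal extension $\widehat C\supseteq C$ realises each $\mathbf 1_C$ as the average of two genuine extreme points, and padding with $\pm g$ closes the total weight to $1$.

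The gap is the passage to infinite support, which you yourself flag as ``the main obstacle'' but do not resolve. The phrase ``applying the construction to $P_Nx^*$ and passing to the limit'' is not innocent: telescoping $x^*=\sum_k(P_{N_k}-P_{N_{k-1}})x^*$ and feeding each block through the finite case yields total weight $\sum_k\|(P_{N_k}-P_{N_{k-1}})x^*\|$, which is typically strictly larger than $1$, so padding is impossible; and a naive weak-$*$ limit of the fractional covers in $\ell_1(\mathcal A)$ can lose mass on the constraints $\sum_{A\ni j}\lambda_A\ge c_j$. What rescues the argument is the compactness you mention but do not use precisely: since $\mathcal A$ is a family of \emph{finite} subsets of $\N$ it is a \emph{countable} compact metrizable space, so the finite fractional covers for $P_Nx^*$ sit in the unit ball of $C(\mathcal A)^*$; any weak-$*$ accumulation point $\mu$ satisfies $\mu(\{A:j\in A\})\ge c_j$ for every $j$ because $A\mapsto\mathbf 1_{\{j\in A\}}$ is continuous on $\mathcal A$, and countability of $\mathcal A$ forces $\mu$ to be purely atomic. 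This furnishes a global fractional cover with total weight at most $1$, after which your layer-cake/sign/pad routine applies unchanged and yields an honest convex series. Without this step (or an equivalent device) the reduction to the finite case does not close.
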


Using this proposition we are able to show that
for regular families $\mathcal{A}$ the dual of $\ha$
does not have delta-points.

\begin{prop}
  If $\mathcal{A}$ is an adequate family of finite subsets of $\N$
  which is spreading, then $\ha^*$ fails to have any delta-points.
\end{prop}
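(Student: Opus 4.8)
The plan is to verify, for every $x^*\in S_{\ha^*}$, the hypotheses of Theorem~\ref{thm:c-infty-finite-no-delta-point} applied to the dual space $X^*:=\ha^*$ with its natural basis. Since $\mathcal{A}$ consists of finite sets, Proposition~\ref{prop:ha-poly} tells us that $(e_i)_{i\in\N}$ is shrinking, so $(e_i^*)_{i\in\N}$ is a $1$-unconditional basis of $X^*$; in particular $\|P_{\{n+1,n+2,\dots\}}x^*\|\to 0$ as $n\to\infty$ for every $x^*\in X^*$. Fix $x^*\in S_{X^*}$. Exactly as in the proof of Theorem~\ref{thm:c-infty-finite-no-delta-point}, changing signs of coordinates is an isometry preserving delta-points, so we may assume all coordinates of $x^*$ are nonnegative. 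It then suffices to exhibit one $n\in\N$ for which the concentration inequality of Theorem~\ref{thm:c-infty-finite-no-delta-point} holds; this forces $x^*$ not to be a delta-point, and as $x^*$ is arbitrary the proposition follows.

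The first ingredient, which requires only the shrinking basis, is that the mass of $x^*$ is uniformly captured by the initial segments of its infinite minimal norming sets. Put $\varepsilon_n:=\|P_{\{n+1,n+2,\dots\}}x^*\|$, so $\varepsilon_n\to 0$. If $D\in M^\infty(x^*)$ with increasing enumeration $D=(d_i)_{i\in\N}$, then $d_{n+1}\ge n+1$, hence $D\setminus D(n)\subseteq\{n+1,n+2,\dots\}$ and $1$-unconditionality gives
\[
  \|P_{D\setminus D(n)}x^*\|\le \|P_{\{n+1,n+2,\dots\}}x^*\|=\varepsilon_n.
\]
Since $\|P_D x^*\|=\|x^*\|=1$, the triangle inequality yields
\[
  \|P_{D(n)}x^*\|\ge \|P_D x^*\|-\|P_{D\setminus D(n)}x^*\|\ge 1-\varepsilon_n
\]
uniformly over all $D\in M^\infty(x^*)$. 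Thus $\|P_E x^*\|\ge 1-\varepsilon_n$ for every $E\in\bigcup_{D\in M^\infty(x^*)}\{D(n)\}$, with $1-\varepsilon_n\to 1$.

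What remains—and this is where both Proposition~\ref{prop:ha_characterize_ext_andlambda_property} and the spreading assumption must be used—is to control the count $s=s(n):=\bigl|\bigcup_{D\in M^\infty(x^*)}\{D(n)\}\bigr|$ so that $\varepsilon_n<\tfrac{1}{2s(n)}$ for some $n$; combined with the previous paragraph this is exactly the hypothesis of Theorem~\ref{thm:c-infty-finite-no-delta-point}. My approach to bounding $s(n)$ is to expand $x^*=\sum_i\lambda_i f_i$ as a convex series of extreme points $f_i=\sum_{j\in A_i}\varepsilon^i_j e_j^*$ with $A_i\in\mathcal{A}^{\text{MAX}}$ (Proposition~\ref{prop:ha_characterize_ext_andlambda_property} and Lemma~\ref{lem-ext-char-AM}), truncate so that $x^*$ is essentially finitely supported, and then invoke the spreading property to show that the infinite minimal norming sets are rigid enough that their length-$n$ initial segments cannot proliferate: intuitively, spreading constrains which coordinates an optimal norming set is forced to use, keeping $s(n)$ small compared with $1/\varepsilon_n$ (in the ideal case showing $M^\infty(x^*)$ is a single set, as for $\ell_1=c_0^*$).

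The main obstacle is precisely this combinatorial rigidity, i.e.\ turning the spreading hypothesis into a quantitative bound on $s(n)$. That spreading is genuinely needed is consistent with the fact that for general adequate families of finite sets one obtains only the weaker conclusion that $\ha^*$ has no \emph{Daugavet}-points; ruling out the (weaker) delta-points in the dual appears to require the additional structure of regular families, which is the reason the argument above cannot avoid using it. An alternative to the $M^\infty$ route would be to build a single $w^*$-slice $S(z,\delta)\ni x^*$ with $z\in S_{\ha}$ finitely supported and norming the dominant finite part of $x^*$, and to use spreading to force the tails $\|P_{\{m+1,\dots\}}y^*\|$ of all $y^*$ in the slice to be small, thereby keeping $\sup_{y^*\in S(z,\delta)}\|x^*-y^*\|$ bounded away from $2$; the difficulty there is identical, namely controlling the tail via the straddling sets of $\mathcal{A}$.
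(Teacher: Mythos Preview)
Your proposal is not a proof: the decisive step is missing, and you acknowledge this yourself. Everything in your first two paragraphs is correct and useful---the shrinking basis, the sign reduction, and the uniform lower bound $\|P_{D(n)}x^*\|\ge 1-\varepsilon_n$ for all $D\in M^\infty(x^*)$---but none of that uses spreading, and none of it bounds $s(n)$. The entire content of the proposition lives in the paragraph beginning ``What remains,'' and there you only \emph{describe} a strategy (expand via CSRP, truncate, invoke spreading to keep $s(n)$ small) without executing it; you then openly call this ``the main obstacle.'' As written, the argument would apply equally well to any adequate family of finite sets, which is precisely the case you note is open.

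The paper avoids the growth-rate question entirely by proving the much stronger fact that $|M^\infty(x^*)|\le 1$ for every $x^*\in S_{\ha^*}$, via a dichotomy. Either removing any single coordinate from $x^*$ strictly drops the norm, in which case $M(x^*)=\{\supp(x^*)\}$ and $|M^\infty(x^*)|\le 1$; or there exists $k\in\supp(x^*)$ with $\|x^*-x_k^*e_k^*\|=1$. In the second case, write $x^*=\sum_n\lambda_n x^*_{F_n,(\varepsilon_i^n)}$ via the CSRP and pick $n$ with $k\in F_n$. Any $y^{**}\in S_{\ha^{**}}$ norming $x^*-x_k^*e_k^*$ must norm each $x^*_{F_n\setminus\{k\},(\varepsilon_i^n)}$, and now spreading is used once: for any $j\ge\max F_n$, the set $(F_n\setminus\{k\})\cup\{j\}$ is a spread of $F_n$, hence lies in $\mathcal{A}$, and evaluating $y^{**}$ on the corresponding extreme point forces $y^{**}_j=0$. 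Thus every norming functional for $x^*-x_k^*e_k^*$ is supported on $\{1,\dots,\max F_n\}$, which forces $M^\infty(x^*)=\emptyset$. In either branch Theorem~\ref{thm:c-infty-finite-no-delta-point} applies with $s=1$ (or vacuously), so the quantitative comparison $\varepsilon_n<1/(2s(n))$ you were aiming for is never needed.
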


\begin{proof}
  By assumption the standard basis $(e_i)_{i \in \N}$ for $\ha$ is
  shrinking and hence the biorthogonal functionals
  $(e_i^*)_{i \in \N}$ is a 1-unconditional basis for $\ha^*$.

  We only need to consider $x^*$ with infinite support
  by Theorem~\ref{thm:c-infty-finite-no-delta-point}.
  From Lemma~\ref{lem-ext-char-AM} we have that
  elements of $\ext \ha^*$ can be written
  \begin{equation*}
    x^*_{A,(\varepsilon_i)} = \sum_{i \in A} \varepsilon_i e_i^*
    \quad \mbox{where} \quad
    A \in \mathcal{A}^{\text{MAX}}\ \mbox{and}\ (\varepsilon_i) \in \{-1,1\}.
  \end{equation*}
  By Proposition~\ref{prop:ha_characterize_ext_andlambda_property}
  we can write
  $x^* = \sum_{n =1}^\infty \lambda _n x^*_{F_n,(\eps_i^n)}$
  where for $F_n \in \mathcal{A}$ and $\varepsilon_i^n \in \{-1,1\}$.
  There are two possibilities. Either $\|x^* - x_k^* e^*_k\| < 1$,
  for every $k \in \supp(x^*)$, and then
  $M^{\infty}(x^*) = \{\supp(x^*)\}$,
  and the result follows from
  Theorem~\ref{thm:c-infty-finite-no-delta-point}.
  The other possibility is that
  there exists $k \in \supp(x^*)$ such that
  $\|x^* - x_k^* e^*_k\| = 1$. Write
  \[
    y^* := x^* - x_k^* e^*_k = \sum_{n=1}^\infty
    \lambda_n x_{F_n',(\eps_i^n)}^*,
  \]
  where $F_n' = F_n \setminus \{k\}$ for all $n$.
  Find
  $y^{**} = (y_i) \in S_{X^{**}}$ such that
  \[
    1
    = y^{**}(y^*)
    = \sum_{n=1}^\infty \lambda_n y^{**}(x_{F_n', (\eps_i^n)}^*).
  \]
  This implies that $y^{**}(x_{F_n', (\eps_i^n)}^*) = 1$
  for every $n \in \N$.
  Now find $n \in \N$ with $k \in F_n$.
  Take any integer $j \in (F_n')^C$ with
  $j \ge \max F_n$, let $G_n := F_n' \cup \{j\}$,
  and let $\eps_j = \sgn y_j$.
  As $\mathcal{A}$ is spreading $G_n \in \mathcal{A}$ and thus
  $\|x^*_{G_n, (\eps_i^n)}\| = 1$.
  But then,
  \[
    1
    \ge y^{**}(x^*_{G_n, (\eps_i^n)})
    = y^{**}(x_{F_n', (\eps_i^n)}^*) + y^{**}(\eps_je^*_j)
    = 1 + |y_j| \ge 1,
  \]
  which forces $y_j$ to be zero.
  From this it follows that $y^{**} = (y_i)$ is zero
  on all coordinates $i \ge \max F_n$ (at least).
  That is, every $A \in M(x^*)$ must be a subset of
  $\left\{1, \ldots, \max F_n \right\}$, so
  $M^{\infty}(x^*) = \emptyset$ and the result follows from
  Theorem~\ref{thm:c-infty-finite-no-delta-point}.
\end{proof}

If $\mathcal{A}$ is an adequate family of finite sets
which is not spreading, we do not know whether or not
$\ha^*$ can have delta-points. But we are able to say
something about Daugavet-points.

Let $X$ be a Banach space.
Recall that $x \in S_X$ is a Daugavet-points
if and only if for every slice $S(x^*,\delta)$ of $B_X$ and
for every $\varepsilon > 0$ there exists $y \in S(x^*,\delta)$
such that $\|x - y\| \geq 2-\varepsilon$
(see e.g. \cite[Lemma~2.3]{AHLP}).

\begin{prop}
  If $x^* \in S_{X^*}$ can be written as
  \begin{equation*}
    x^* = \sum_{i=1}^\infty \lambda_i f_i
  \end{equation*}
  where $\lambda_i \ge 0$, $\sum_{i=1}^\infty \lambda_i = 1$,
  $f_i$ are ($w^*$-)strongly exposed points
  in $B_{X^*}$, then $x^*$ is not a Daugavet-point.

  In particular,
  if $\mathcal{A}$ is an adequate family of finite sets,
  then $\ha^*$ does not have Daugavet-points.
\end{prop}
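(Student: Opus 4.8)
The plan is to falsify the Daugavet-point condition (Lemma~\ref{lem:Delta_point_crit}) directly: I would produce a single slice of $B_{X^*}$ on which the distance to $x^*$ is \emph{uniformly} strictly below $2$. Since $\sum_i \lambda_i = 1$ with all $\lambda_i \ge 0$, at least one coefficient is positive; after reindexing assume $\lambda_1 > 0$. The first step is the elementary estimate
\[
  \|x^* - f_1\|
  = \Bigl\|(\lambda_1 - 1)f_1 + \sum_{i \ge 2}\lambda_i f_i\Bigr\|
  \le (1-\lambda_1) + \sum_{i\ge 2}\lambda_i
  = 2(1-\lambda_1),
\]
which uses only $\|f_i\| = 1$ for all $i$. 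This says that $x^*$ already sits within distance $2(1-\lambda_1) < 2$ of the single strongly exposed point $f_1$.

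The second step exploits that $f_1$ is $w^*$-strongly exposed. Let $x_1 \in S_X$ be a functional $w^*$-strongly exposing $f_1$, so that $f_1(x_1) = 1$ and the slices
\[
  S(x_1,\delta) = \{\, y^* \in B_{X^*} : y^*(x_1) > 1-\delta \,\}
\]
shrink in diameter to $0$ as $\delta \to 0$. Because $x_1 \in S_X \subseteq S_{X^{**}}$, each $S(x_1,\delta)$ is a legitimate slice of $B_{X^*}$ of the kind appearing in the Daugavet-point criterion. Choosing $\delta$ so small that every point of $S(x_1,\delta)$ lies within $\lambda_1$ of $f_1$ (possible since $f_1 \in S(x_1,\delta)$ and the diameter shrinks), I obtain for every $y^* \in S(x_1,\delta)$
\[
  \|x^* - y^*\|
  \le \|x^* - f_1\| + \|f_1 - y^*\|
  < 2(1-\lambda_1) + \lambda_1
  = 2 - \lambda_1 .
\]
Hence with $\varepsilon := \lambda_1/2 > 0$ no element of $S(x_1,\delta)$ is at distance $\ge 2-\varepsilon$ from $x^*$, so $x^*$ is not a Daugavet-point. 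Note this slice need not contain $x^*$, which is permissible precisely because we are disproving the Daugavet-point property and not the weaker delta-point property.

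For the concluding ``in particular'', I would combine two facts already at hand. By Proposition~\ref{prop:ha_characterize_ext_andlambda_property} the dual $\ha^*$ has the CSRP, so every $x^* \in S_{\ha^*}$ is a convex series $\sum_i \lambda_i f_i$ of extreme points of $B_{\ha^*}$; and by Proposition~\ref{prop:ext-er-exposed}, for an adequate family of finite sets these extreme points are exactly the $w^*$-strongly exposed points. Thus every unit vector of $\ha^*$ satisfies the hypothesis of the first part and therefore fails to be a Daugavet-point. I do not anticipate a genuine obstacle: the computation is routine, and the only point demanding a line of care is that the exposing functional lies in the predual $X \subseteq X^{**}$, which is exactly what certifies $S(x_1,\delta)$ as an admissible slice of $B_{X^*}$.
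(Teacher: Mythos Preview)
Your proof is correct and follows essentially the same approach as the paper: pick an index $j$ with $\lambda_j>0$, take a slice of $B_{X^*}$ around the strongly exposed point $f_j$ of sufficiently small diameter, and bound $\|x^*-y^*\|$ for every $y^*$ in that slice by $2-\lambda_j$. The only cosmetic difference is in how the triangle inequality is split: you pass through $f_1$ and therefore need the slice to have diameter $<\lambda_1$, whereas the paper writes $x^*-y^* = \sum_{i\neq j}\lambda_i f_i + (\lambda_j f_j - y^*)$ and only needs diameter $<1$; both yield the same final bound $2-\lambda_j$. Your handling of the ``in particular'' clause via Propositions~\ref{prop:ha_characterize_ext_andlambda_property} and~\ref{prop:ext-er-exposed} is exactly what is intended.
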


\begin{proof}
  Choose $j$ with $\lambda_j > 0$.
  Find a slice $S(x^{**},\delta)$ containing $f_j$ with diameter less than $1$.
  Then for any $y^* \in S(x^{**},\delta)$ we have
  \begin{align*}
    \|x^* - y^*\|
    &\le
    \|\sum_{i \neq j} \lambda_i f_i\|
    +
    \|y^* - \lambda_j f_j\|\\
    &\le
    \sum_{i \neq j} \lambda_i
    + (1-\lambda_j)
    + \lambda_j\|y^* - f_j\|
    \\
    &< 2(1-\lambda_j) + \lambda_j
    = 2 - \lambda_j.
  \end{align*}
  So the distance from $x^*$ to $y^*$ is bounded away from $2$.
\end{proof}

\section{Polyhedrality}
\label{sec:polyhedrality}

In this section we study polyhedrality in $\ha$ spaces.
The goal is to describe the polyhedrality of $\ha$
spaces in terms of the structure of the adequate family $\mathcal{A}$.
Let us begin by recalling some concepts and results.

If $X$ is a Banach space and  $A \in X^*$ then $A'$ denotes
the set of all $w^*$\emph{-limit points} of $A$,
that is
\[
  A' = \left\{
    f \in X^* \
    :
    \  f \in  \overline{A\setminus \{f\} }^{w^*}
  \right\}.
\]

In \cite{MR2057283} Fonf and Veselý identified eight known
definitions of polyhedrality from the literature and
gave examples showing that in general they are different.
We will use the following three definitions from their paper.
\begin{defn}\label{defn:polyhedrlaity}
  Let $X$ be a Banach space. Then
  \begin{enumerate}
  \item
    $X$ is (I)-polyhedral if
    $\left(\ext B_{X^*} \right)' \subseteq \{0\}$;
  \item
    $X$ is (IV)-polyhedral if $f(x) < 1$ whenever
    $x\in S_{X}$ and $f \in \left(\ext B_{X^*} \right)'$;
  \item
    $X$ is (V)-polyhedral if
    $\sup\left\{
      f(x) \ : \  f \in \ext B_{X^*} \setminus D(x)
    \right\} < 1$
    for each $x\in S_X$,
    where $D(x) = \left\{f \in S_{X^*} : f(x) = 1 \right\}$.
  \end{enumerate}
\end{defn}
In the proof of Proposition~\ref{prop:ha-poly} we met the following:

\begin{thm}[Theorem~4.5 in \cite{le2019geometry}]
  \label{thm:ha-vpol}
  Let $\A$ be an adequate family of finite subsets of $\N$.
  Then $\ha$ is (V)-polyhedral.
\end{thm}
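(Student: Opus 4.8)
The plan is to work directly from the definition of (V)-polyhedrality together with the description of the extreme points of $B_{\ha^*}$ furnished by Lemma~\ref{lem-ext-char-AM}. Fix $x\in S_{\ha}$; every $f\in\ext B_{\ha^*}$ has the form $f=\sum_{i\in A}\eps_i e_i^*$ with $A\in\mathcal{A}^{\text{MAX}}$ (a \emph{finite} set) and $\eps_i\in\{-1,1\}$, so that $f(x)=\sum_{i\in A}|x_i|-2\sum_{i\in M}|x_i|$, where $M=\{i\in A:\eps_i x_i<0\}$ is the set of sign mismatches. Hence $f(x)\le\sum_{i\in A}|x_i|\le1$, and $f\in D(x)$ exactly when $A$ norms $x$ (that is $\sum_{i\in A}|x_i|=1$) and there is no mismatch on $\supp(x)$. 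I would therefore assume for contradiction that $s:=\sup\{f(x):f\in\ext B_{\ha^*}\setminus D(x)\}=1$ and pick extreme points $f_n=\sum_{i\in A_n}\eps_i^n e_i^*\notin D(x)$ with $f_n(x)\to1$, aiming to show that in fact $f_n\in D(x)$ for large $n$.

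The engine of the argument is compactness of $\mathcal{A}$ in the topology of pointwise convergence, together with the fact that, since $(e_i)$ is a Schauder basis, the tails $\|P_{(N,\infty)}x\|=\|x-P_{\{1,\dots,N\}}x\|$ tend to $0$ (in particular $x_i\to0$). Passing to a subsequence I obtain $A_n\to A\in\mathcal{A}$ pointwise, and after a diagonal refinement the signs stabilize, $\eps_i^n\to\eps_i$ for each $i\in A$; set $f=\sum_{i\in A}\eps_i e_i^*\in B_{\ha^*}$. Comparing heads and discarding tails (uniformly, via the tail estimate) gives $f_n(x)\to f(x)$, so $f(x)=1$. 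Since $1=f(x)=\sum_{i\in A}\eps_i x_i\le\sum_{i\in A}|x_i|\le\|x\|=1$, the limit set $A$ must norm $x$ and satisfy $\eps_i=\sgn(x_i)$ on $A\cap\supp(x)$; in particular $A$ is a finite norming set.

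The decisive step -- and the point I expect to be the main obstacle -- is to rule out mismatches that drift off to infinity, where the weights $|x_i|$ are tiny and so $f_n(x)$ could a priori approach $1$ from below while $f_n\notin D(x)$. This is exactly where finiteness of $A$ is used crucially: choosing $N\ge\max A$, pointwise convergence forces $A\subseteq A_n$ and sign agreement on $A\cap\supp(x)$ for all large $n$. But $A$ already norms $x$, so from $A\subseteq A_n\in\mathcal{A}$ and $\|x\|=1$ I obtain
\[
  1=\|x\|\ge\sum_{i\in A_n}|x_i|=\sum_{i\in A}|x_i|+\sum_{i\in A_n\setminus A}|x_i|=1+\sum_{i\in A_n\setminus A}|x_i|,
\]
forcing $x_i=0$ for every $i\in A_n\setminus A$. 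Consequently $f_n(x)=\sum_{i\in A}\eps_i x_i+\sum_{i\in A_n\setminus A}\eps_i^n x_i=1+0=1$, i.e.\ $f_n\in D(x)$, contradicting the choice of $f_n$. Thus $s<1$ for every $x\in S_{\ha}$, which is precisely (V)-polyhedrality. The only bookkeeping I would check carefully is the uniformity of the tail comparison used to pass from $f_n(x)\to1$ to $f(x)=1$ and the diagonal extraction of stabilized signs, both of which are routine consequences of $x_i\to0$ and the finiteness of each $A_n$.
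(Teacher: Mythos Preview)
The paper does not give its own proof of this statement: Theorem~\ref{thm:ha-vpol} is quoted from \cite[Theorem~4.5]{le2019geometry}, and the only commentary appearing in the present paper is the remark (inside the proof of Proposition~\ref{prop:ha-poly}) that the argument of Antunes--Beanland--Chu, written there for regular families, goes through verbatim for adequate families of finite sets. So there is no in-paper argument to compare your proposal against.

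That said, your argument is correct and self-contained. The compactness step is sound because $\{0,1\}^{\N}$ is metrizable, so $\mathcal{A}$ is sequentially compact; the limit set $A$ is finite by hypothesis, and the sign stabilization requires only finitely many subsequence extractions since $|A|<\infty$ (no genuine diagonal is needed). The tail estimate
\[
  |f_n(x)-f(x)|\le\sum_{i\in A_n\setminus A}|x_i|\le\|P_{(N,\infty)}x\|\to0,
\]
using $\min(A_n\setminus A)\to\infty$, hereditarity of $\mathcal A$, and the fact that $(e_i)$ is a Schauder basis for $\ha$, gives $f(x)=1$ cleanly. The bootstrapping from $\sum_{i\in A}|x_i|=1$ and $A\subseteq A_n\in\mathcal A$ to $x_i=0$ on $A_n\setminus A$, and thence to $f_n(x)=1$ exactly (hence $f_n\in D(x)$), is the decisive point and is valid; this is precisely where finiteness of the sets in $\mathcal A$ enters and where the argument would break down if $\mathcal A$ contained an infinite set.
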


It is natural to ask whether any (V)-polyhedral $\ha$ can be
(IV)- or even (I)-polyhedral.
Our next goal is to show the following:
If $\ha$ is polyhedral, then it is either
(I)-polyhedral or (V)-polyhedral.
Considering Proposition~\ref{prop:ha-poly} and
Theorem~\ref{thm:ha-vpol} we only need to show
the following.

\begin{thm}\label{thm:polyhedrality-I-IV}
  Let $\mathcal{A}$ be an adequate family of finite sets.
  Then the following are equivalent:
  \begin{enumerate}
  \item\label{item:ph-ch-1}
    $\left\{A\in \mathcal{A}  \ : \  i\in A \right\}$
    is finite for all $i \in \N$.
  \item\label{item:ph-ch-2}
    $\ha$ is (I)-polyhedral.
  \item\label{item:ph-ch-3}
    $\ha$ is (IV)-polyhedral.
  \end{enumerate}
\end{thm}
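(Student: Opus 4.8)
The plan is to prove the three equivalences by establishing a cycle $\ref{item:ph-ch-1} \implies \ref{item:ph-ch-2} \implies \ref{item:ph-ch-3} \implies \ref{item:ph-ch-1}$. The implication $\ref{item:ph-ch-2} \implies \ref{item:ph-ch-3}$ is immediate from the definitions, since $(\ext B_{X^*})' \subseteq \{0\}$ forces $f(x) = 0 < 1$ for every $x \in S_X$ and every $f \in (\ext B_{X^*})'$. So the real content lies in the other two implications, and throughout I would use the explicit description of $\ext B_{\ha^*}$ from Lemma~\ref{lem-ext-char-AM}: every extreme point of the dual ball has the form $\sum_{i \in A} \varepsilon_i e_i^*$ with $A \in \mathcal{A}^{\text{MAX}}$ and signs $\varepsilon_i$. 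Since $\mathcal{A}$ consists of finite sets, each such extreme point is finitely supported.

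For $\ref{item:ph-ch-1} \implies \ref{item:ph-ch-2}$, I would take a $w^*$-limit point $f \in (\ext B_{X^*})'$ and show $f = 0$. By definition $f$ is a $w^*$-limit of a net of distinct extreme points $x^*_{A_\alpha, (\varepsilon_i^\alpha)}$. Fix a coordinate $i \in \N$. The coefficient $f(e_i)$ is the limit of $\varepsilon_i^\alpha$ when $i \in A_\alpha$ and is $0$ otherwise; crucially, the assumption in \ref{item:ph-ch-1} says that only finitely many $A \in \mathcal{A}$ contain $i$, so $i$ lies in only finitely many of the $A_\alpha$. Because the extreme points are distinct and the family $\{A : i \in A\}$ is finite, the net must eventually leave every fixed set containing $i$; hence eventually $i \notin A_\alpha$ and the $i$-th coordinate of the net is eventually $0$. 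This gives $f(e_i) = 0$ for all $i$, so $f = 0$ and $\ha$ is (I)-polyhedral. The main point to handle carefully here is that $w^*$-convergence only controls coordinates through the functionals $e_i$ (which it does, since these lie in $\ha \subseteq \ha^{**}$), so I would verify that coordinatewise vanishing indeed yields $f = 0$ in $\ha^*$.

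For the contrapositive of $\ref{item:ph-ch-3} \implies \ref{item:ph-ch-1}$, suppose \ref{item:ph-ch-1} fails, so there is some index $i_0$ lying in infinitely many sets of $\mathcal{A}$; by heredity we may assume these are maximal sets $A_1, A_2, \ldots \in \mathcal{A}^{\text{MAX}}$, all containing $i_0$ and all distinct. Consider the extreme points $f_n = \sum_{i \in A_n} e_i^*$ (taking all signs $+1$). Each satisfies $f_n(e_{i_0}) = 1$. The idea is to extract a $w^*$-convergent subnet; by a diagonal argument over the (countably many) coordinates one can arrange that $f_n \to f$ weak-$*$ along a subsequence, where $f \in (\ext B_{X^*})'$ and $f(e_{i_0}) = 1$ since every $f_n$ has $i_0$-coordinate equal to $1$. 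Taking $x = e_{i_0} \in S_{\ha}$, we then have $f(x) = 1$, which violates (IV)-polyhedrality. The main obstacle, and the step I would spend the most care on, is controlling the limit functional $f$: I must ensure the chosen subsequence genuinely produces a $w^*$-limit point distinct from the $f_n$ and with the right $i_0$-coordinate, which amounts to checking that the coordinate $i_0$ survives in the limit while no cancellation forces $f(e_{i_0}) < 1$. Since all signs are chosen positive and $i_0 \in A_n$ for every $n$, the coordinate is constantly $1$ along the sequence, so it passes to the limit without difficulty, completing the cycle.
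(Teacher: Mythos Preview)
Your proposal is correct and follows the same cycle $\ref{item:ph-ch-1}\Rightarrow\ref{item:ph-ch-2}\Rightarrow\ref{item:ph-ch-3}\Rightarrow\ref{item:ph-ch-1}$ as the paper, with the same idea in each step. The only noteworthy difference is in the contrapositive of $\ref{item:ph-ch-3}\Rightarrow\ref{item:ph-ch-1}$: the paper first passes to a subsequence of the maximal sets $A_n$ converging pointwise to some $A\in\mathcal{A}$, then witnesses the failure of (IV)-polyhedrality with the limit functional $x^*=\sum_{i\in A}e_i^*$ and the vector $x=\tfrac{1}{|A|}\sum_{i\in A}e_i$, explicitly checking that $x_n^*=\sum_{i\in A_n}e_i^*\to x^*$ weak-$*$ because $\min(A_n\setminus A)\to\infty$. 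Your version is a small simplification: you only need that the weak-$*$ limit $f$ satisfies $f(e_{i_0})=1$, so $x=e_{i_0}$ already does the job and there is no need to identify the limit set $A$. Two minor remarks: the passage from infinitely many $A\ni i_0$ to infinitely many \emph{maximal} $A\ni i_0$ is not ``heredity'' but rather that each finite maximal set has only finitely many subsets; and your worry that $f$ be ``distinct from the $f_n$'' is unnecessary, since even if $f=f_{n_0}$ the remaining (distinct) $f_n$ still place $f$ in $(\ext B_{\ha^*})'$.
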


\begin{proof}
  \ref{item:ph-ch-2} $\implies$ \ref{item:ph-ch-3}
  is trivial.

  \ref{item:ph-ch-1} $\implies$ \ref{item:ph-ch-2}.
  By assumption and Lemma~\ref{lem-ext-char-AM},
  for any $i\in \N$, there are only a finite number of extreme points
  that have support on $i$. It follows that if
  $f\in (\ext B_{X^*})'$, then $f(e_i) = 0$ for all
  $i\in \N$. Therefore $f$ is $0$.

  \ref{item:ph-ch-3} $\implies$ \ref{item:ph-ch-1}.
  First observe that if
  $\left\{A\in \mathcal{A} \ : \ i \in A  \right\}$
  is infinite, then the set
  $\mathcal{C}_i  =
  \left\{A\in \mathcal{A}^{\text{MAX }}   \ : \ i\in A \right\}$
  is also infinite.

  Assume that for $i \in \N$ the set $\mathcal{C}_i$ is infinite
  and let $(A_n)_{n=1}^\infty \subset \mathcal{C}_i$ be a sequence of
  distinct sets.
  As elements of $\mathcal{A}$ are finite and
  $\mathcal{A}$ is compact
  in the topology of pointwise convergence on $\N$,
  we can by passing to a subsequence if necessary assume that
  $(A_n)$ converges pointwise to some $A\in \mathcal{A}$
  and that $A\subset A_n$ for all $n\in \N$.

  If $k \in \N$ then there exists $N \in \N$
  such that $\left\{1, 2, \ldots, k \right\} \cap A_n
  = \left\{1, 2, \ldots, k \right\}\cap A$ for all $n>N$,
  i.e. $\min (A_n\setminus A) \rightarrow \infty$.

  With the sequence $(A_n)$, we wish to show that $\ha$ is not
  (IV)-polyhedral.
  That is, we wish to show that there exist some
  $x^* \in (\ext B_{\ha^*})'$ and some $x \in S_{\ha}$ such that $x^*(x) = 1$.
  To this end, define $x^* = \sum_{i\in A} e_{i}^*$ and
  $x = \frac{1}{\left|A \right| } \sum_{i\in A} e_i\in S_{\ha}$.
  As $x^*(x) = 1$, it remains to show that $x^*$ is a $w^*$-limit
  of elements in $\ext B_{\ha^*}\setminus \{x^*\}$.

  By Lemma~\ref{lem-ext-char-AM}
  $x_n^* := \sum_{i\in A_n} e_i^*
  = x^* + \sum_{i\in A_n \setminus A}e_i^*  \in \ext B_{\ha^*}$.
  If
  $y \in \ha$  we get that
  \begin{align*}
    |x_n^*(y)  -x^*(y)|
    &=
    | x^*(y) + \sum_{i\in A_n\setminus A} y_i  - x^*(y)|
    \\
    &\leq
    \sum_{i \in A_n \setminus A} |y_i|.
  \end{align*}
  Since $\sum_{i\in A_n\setminus A} |y_i| \rightarrow 0$
  because $\min (A_n \setminus A) \rightarrow \infty$
  we get $x_n^* \xrightarrow{w^*} x^*$
  and thus $\ha$ is not (IV)-polyhedral.
\end{proof}

The above theorem shows that the only combinatorical Banach space
which is (IV)-polyhedral is $c_0$.
Antunes, Beanland and Chu \cite[Theorem~4.5]{le2019geometry}
(see also their Remark~4.4)
proved that combinatorial Banach spaces are (V)-polyhedral.
So our next corollary shows that this is in fact best possible.

\begin{cor}
  If $\mathcal{A}$ is a regular family of sets
  such that $\ha$ is (IV)-polyhedral,
  then $\ha = c_0$.
\end{cor}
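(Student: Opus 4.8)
The plan is to combine the characterization in Theorem~\ref{thm:polyhedrality-I-IV} with the spreading property that defines regular families, and argue by contraposition. Since $\ha$ is assumed (IV)-polyhedral, the equivalence \ref{item:ph-ch-3} $\iff$ \ref{item:ph-ch-1} in Theorem~\ref{thm:polyhedrality-I-IV} tells us that $\left\{A\in \mathcal{A} : i \in A\right\}$ is finite for every $i \in \N$. The whole corollary then reduces to showing that, for a \emph{spreading} family, this finiteness condition forces $\mathcal{A}$ to contain no set of size $\ge 2$.

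First I would suppose, toward a contradiction, that $\mathcal{A}$ contains some set with at least two elements, and in particular a two-element set $\{a,b\}$ with $a < b$ (such a set exists by heredity, since $\mathcal{A}$ is adequate). Applying the spreading property with $k_1 = a \le l_1 = a$ and $k_2 = b \le l_2 = b'$ for an arbitrary $b' \ge b$, one obtains $\{a,b'\} \in \mathcal{A}$ for every $b' \ge b$. These are infinitely many distinct members of $\mathcal{A}$, all of which contain the fixed index $a$, so $\left\{A\in \mathcal{A} : a \in A\right\}$ is infinite. This contradicts the finiteness we extracted from Theorem~\ref{thm:polyhedrality-I-IV}.

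Hence every member of $\mathcal{A}$ has at most one element; that is, $\mathcal{A}$ consists precisely of the empty set together with the singletons $\{i\}$, $i \in \N$. For such a family the defining norm collapses to $\left\|\sum_i a_i e_i\right\| = \sup_{A \in \mathcal{A}} \sum_{i \in A} |a_i| = \sup_i |a_i|$, and the completion of $c_{00}$ under this norm is exactly $c_0$. Therefore $\ha = c_0$, as claimed.

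There is no serious obstacle here once Theorem~\ref{thm:polyhedrality-I-IV} is available; the only point requiring care is the direction of the spreading inequality. One must be able to push the larger coordinate arbitrarily far to the right while keeping the smaller one fixed (and still respecting the ordering $a < b'$), which is precisely what manufactures infinitely many sets sharing the index $a$. The spreading hypothesis is genuinely used, and indeed is what rules out more general (V)-polyhedral combinatorial spaces from being (IV)-polyhedral.
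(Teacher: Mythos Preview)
Your proof is correct and is essentially the same as the paper's: both invoke Theorem~\ref{thm:polyhedrality-I-IV} to reduce to the finiteness condition on $\{A\in\mathcal{A}:i\in A\}$, then pick a set of size at least two, fix its minimum element, and use spreading to produce infinitely many members of $\mathcal{A}$ containing that minimum. The only cosmetic difference is that you first pass to a two-element subset via heredity, whereas the paper spreads the original set $A$ directly.
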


\begin{proof}
  Assume that $A \in \mathcal{A}$ with $|A| > 1$.
  Let $i = \min A$. Since there are infinitely many
  spreads of $A$ we have that
  $\left\{A\in \mathcal{A}  \ : \  i\in A \right\}$
  is infinite.
\end{proof}

We end the paper with some questions.

Let $\mathcal{A}$ be an adequate family.
Proposition~\ref{prop:ha-poly} gives a simple
characterization of when $\ha$ contains a copy of $\ell_1$
and $\hap$ never contains $\ell_1$ by Lemma~\ref{lem:hap_shrinking}
(and James' characterization of shrinking unconditional bases).

Note that $\ha$ and even $\hap$ contains an isometric
copy of $c_0$ if there exists an infinite subset
$E\subseteq \N$ such that $|A \cap E| \le 1$
for all $A \in \mathcal{A}$. We ask:

\begin{quest}
  What is a natural condition on $\mathcal{A}$
  such that $\ha$ (or $\hap$) does not contain $c_0$.
  That is, when does $\hap$ have a boundedly complete basis?
\end{quest}

We have seen that if $\mathcal{A}$ is an adequate
family of finite sets which is spreading, then
neither $\ha$ nor $\ha^*$ have delta-points.
We do not know if that is also the case
if $\mathcal{A}$ is not spreading.

\begin{quest}
  If $\ha$ is (V)-polyhedral, 
  does then $\ha^*$ fail to have delta-points?

  One can even ask: If $\ha$ is (I)-polyhedral,
  does then $\ha^*$ fail to have delta-points?
  Note that if $X$ is (I)-polyhedral,
  then $X$ is isometric to a subspace of $c_0$
  \cite[Theorem~1.2]{MR2057283}.
\end{quest}

\section*{Acknowledgements}
The authors would like to thank Stanimir Troyanski for discussions on
the topic of the paper.

\def\cprime{$'$}
\providecommand{\bysame}{\leavevmode\hbox to3em{\hrulefill}\thinspace}
\providecommand{\MR}{\relax\ifhmode\unskip\space\fi MR }
\providecommand{\MRhref}[2]{%
  \href{http://www.ams.org/mathscinet-getitem?mr=#1}{#2}
}
\providecommand{\href}[2]{#2}

\end{document}